\newtheorem{alg}{Algorithm}[section]
\newtheorem{definition}{Definition}[section]
\newtheorem{theorem}{Theorem}[section]
\newtheorem{corollary}{Corollary}[section]
\newtheorem{lemma}{Lemma}[section]
\newenvironment{proof}{\paragraph{Proof:}}{\hfill$\square$}
\begin{document}


\title{A Two-Dimensional Improvement for Farr-Gao Algorithm\footnote{Supported by National Natural Science Foundation of China (No. 11101185).}}

\author{Tian Dong\footnote{email: dongtian@jlu.edu.cn}\\ \small School of Mathematics, Key Lab. of Symbolic Computation\\ \small and Knowledge Engineering \textup{(}Ministry of Education\textup{)}, \\ \small Jilin University, Changchun 130012, China}

\date{}

%

\maketitle


\begin{abstract}
Farr-Gao algorithm is a state-of-the-art algorithm for reduced Gr\"{o}bner bases of vanishing ideals of finite points, which has been implemented in \verb"Maple"$^\circledR$ as a build-in command. In this paper, we present a two-dimensional improvement for it that employs a preprocessing strategy for computing reduced Gr\"{o}bner bases associated with tower subsets of given point sets. Experimental results show that the preprocessed Farr-Gao algorithm is more efficient than the classical one.

\vskip 6pt

\noindent \textbf{Keywords}: Gr\"{o}bner basis; Vanishing ideal; Tower set; Gr\"{o}bner \'{e}scalier; Newton interpolation basis

\vskip 6pt

\noindent \textbf{2000 Mathematics Subject Classification}: 13P10
\end{abstract}

\section{Introduction}
\label{s:int}

Let $\mathbb{F}$ be a field, and let $\Pi^d:=\mathbb{F}[x_1, x_2, \ldots, x_d]$ denote the $d$-variate polynomial ring over $\mathbb{F}$. It is well known that the set of polynomials in $\Pi^d$ that vanish at a finite nonempty set $\Xi\subset \mathbb{F}^d$ forms an ideal in $\Pi^d$ which is called the \emph{vanishing ideal} of $\Xi$, denoted by $\mathcal{I}(\Xi)$. In view of the applications of vanishing ideals in the fields of mathematics and other sciences in recent years\cite{GS1999, LS2004}, there has been increasing interest in their
Gr\"{o}bner bases\cite{CLO2007} and Gr\"{o}bner \'{e}scaliers (aka standard set, standard monomials etc.)\cite{Mor2009}.

The most significant milestone of computing vanishing ideals
is the Buchberger -M\"{o}ller algorithm \cite{MB1982} that yields, for fixed $\Xi$ and monomial order $\prec$ on $\Pi^d$, the
reduced Gr\"{o}bner basis $G$ and the Gr\"{o}bner \'{e}scalier $M$ of $\mathcal{I}(\Xi)$ w.r.t. $\prec$. It also produces a Newton interpolation basis $N$ for the $\mathbb{F}$-linear space spanned by $M$. One decade later, MMM algorithm in \cite{MMM1993} extended Buchberger-M\"{o}ller algorithm to solve general zero-dimensional ideals. And then, \cite{ABKR2000}
introduced a modular version of Buchberger-M\"{o}ller algorithm with lower complexity in $\mathbb{Q}^d$. All these three algorithms apply Gauss elimination on generalized Vandermonde matrices regardless of the order of the points in $\Xi$.

As is well known, $G$ depends not only on $\#\Xi$ (as in univariate cases) but significantly on the geometry (distribution) of $\Xi$ that is complex in $\mathbb{F}^d$ (see \cite{Sau2006}). However, the algorithms mentioned above do not take this into account. In 2006, Farr and Gao \cite{FG2006} presented a more effcient algorithm for $G$ (called Farr-Gao algorithm hereafter) that has been implemented in \verb"Maple"$^\circledR$ as build-in command \verb"VanishingIdeal". Arguably, the most distinguishing feature of Farr-Gao algorithm is its point-sorting strategy that provides the possibility to borrow the idea of univariate Newton interpolation. Once the points are sorted, the computation will be performed along parallel lines, one after another. On each line, we are essentially solving univariate Newton interpolation and hence the amount of reduction is decreased. The process of Farr-Gao algorithm implies that multivariate Newton interpolation would be helpful for the computation of vanishing ideals. Concretely, if we could theoretically obtain a Newton interpolation basis of some subset of $\Xi$, then the amount of reduction of Farr-Gao algorithm would be decreased further.

Let $\Xi$ be a Cartesian set in $\mathbb{F}^d$. \cite{Sau2004} gave the unique Gr\"{o}bner \'{e}scalier $M$ of $\mathcal{I}(\Xi)$ in theory which implies that $\mathcal{I}(\Xi)$ has a unique reduced Gr\"{o}bner bases  w.r.t. any monomial order (see \cite{LZD2012}). Moreover, we can also construct Newton interpolation bases for $\Xi$ theoretically. Based on this, \cite{WZD2010} proposed a bivariate preprocessing paradigm for Buchberger-M\"{o}ller algorithm that inputs the monomial (Gr\"{o}bner \'{e}scalier) and Newton interpolation basis for a maximal Cartesian subset of $\Xi$ into Buchberger-M\"{o}ller algorithm as initial values. However, since the distribution of a Cartesian set is fairly restricted, in many cases the maximal Cartesian subsets are not large enough and therefore the improvement is minor.

In the following, we first introduce a new type of finite nonempty sets, tower sets, in $\mathbb{F}^2$ that have ``freer" distributions than bivariate Cartesian sets whose formal definition is provided in Section \ref{s:tower} where we also establish a new criterion for bivariate Cartesian sets for the purpose of investigating the relation between tower sets and Cartesian sets. Next, in Section \ref{s:result}, we theoretically offer the Gr\"{o}bner \'{e}scaliers of vanishing ideals of tower sets w.r.t. commonly used monomial orders as well as the Newton interpolation bases spanned by them. And, finally, these results lead to our main algorithm and the timings of some experiments are given.

\section{Bivariate tower sets}\label{s:tower}

Let $\mathbb{N}_0$ stand for the monoid of nonnegative integers. A \emph{polynomial}
$f\in \Pi^2=\mathbb{F}[x, y]$ is of the form
$$
f=\sum_{(i,j)\in \mathbb{N}_0^2} c_{ij} x^iy^j, \hspace{0.8cm}\#\{(i,j)\in
\mathbb{N}_0^2: 0\neq c_{ij}\in \mathbb{F} \} < \infty,
$$
where \emph{monomial} $x^iy^j$ is a product for vector $(i, j)$. The set of all monomials in $\Pi^2$ is denoted by $\mathbb{T}^2$.

Fix a monomial order $\prec$ on $\Pi^2$ that could be lexicographical
order $\prec_\text{lex}$ (\verb"plex(x, y)" in \verb"Maple"$^\circledR$), inverse lexicographical order
$\prec_\text{inlex}$ (\verb"plex(y, x)" in \verb"Maple"$^\circledR$), graded lexicographical order
$\prec_\text{grlex}$ (\verb"grlex(x, y)" in \verb"Maple"$^\circledR$), or graded reverse lexicographical order
$\prec_\text{grevlex}$ (\verb"tdeg(x, y)" in \verb"Maple"$^\circledR$) etc, cf. \cite{CLO2007}. For all nonzero $f
\in \Pi^2$, we let $\mathrm{LT}_\prec(f)$ signify the \emph{leading term}, $\mathrm{LM}_\prec(f)$ the \emph{leading monomial}, and $\mathrm{LC}_\prec(f)$
the \emph{leading coefficient} of $f$. Furthermore, for a nonempty
set $F \subset \Pi^2$, set
$$
    \mathrm{LM}_\prec(F) := \{ \mathrm{LM}_\prec(f) : f \in F\}.
$$
Let $G$ be the reduced Gr\"{o}bner basis for a zero-dimensional ideal $\mathcal{I}\subset \Pi^2$ w.r.t. $\prec$. According to \cite{Mor2009}, the monomial set
\begin{equation}\label{e:nf}
  \mathcal{N}_\prec(\mathcal{I}):=\{t\in \mathbb{T}^2:
\mathrm{LM}_\prec(g)\nmid t, \forall g\in G\}
\end{equation}
forms the \emph{Gr\"{o}bner \'{e}scalier} of $\mathcal{I}$ w.r.t. $\prec$, and its
\emph{corner}
\begin{equation}\label{e:corner}
        \mathcal{C}[\mathcal{N}_\prec(\mathcal{I})]:=\left\{t\in \mathbb{T}^2: x|t\Rightarrow
t/x\in \mathcal{N}_\prec(\mathcal{I}), y|t\Rightarrow
t/y\in \mathcal{N}_\prec(\mathcal{I})\right\}\backslash \mathcal{N}_\prec(\mathcal{I})
\end{equation}
is equal to $\mathrm{LM}_\prec(G)$.

Let $\mathcal{A}$ be a finite nonempty set in $\mathbb{N}_0^2$. It is called \emph{lower} if for any $(i, j) \in \mathcal{A}$ we always have
\begin{equation}\label{e:lower}
\{(i', j')\in \mathbb{N}_0^2: 0\leq i' \leq i, 0\leq j' \leq j\} \subseteq \mathcal{A}.
\end{equation}
Set
\begin{equation}\label{e:bb}
    \mathrm{b}_x(\mathcal{A}):=\max_{(i, 0)\in \mathcal{A}} i,\qquad \mathrm{b}_y(\mathcal{A}):=\max_{(0, j)\in \mathcal{A}} j.
\end{equation}
Subfigure (b) of Fig. \ref{f:1} illustrates a lower set with $(\mathrm{b}_x, \mathrm{b}_y)=(4, 7)$ labeled. Obviously, $\mathrm{b}_x(\mathcal{A})$ and $\mathrm{b}_y(\mathcal{A})$ alone are not enough to determine $\mathcal{A}$. Hence, we introduce sequences $m_0, m_1, \ldots, m_{\mathrm{b}_y(\mathcal{A})}$ and $n_0, n_1, \ldots, n_{\mathrm{b}_x(\mathcal{A})}$ that can uniquely determine $\mathcal{A}$ respectively, where
\begin{align*}
    m_j=&\max_{(i,j)\in \mathcal{A}}i,\quad 0\leq j\leq \mathrm{b}_y(\mathcal{A}),\\
    n_i=&\max_{(i,j)\in \mathcal{A}}j,\quad 0\leq i\leq \mathrm{b}_x(\mathcal{A}).
\end{align*}
It is easy to see that \eqref{e:bb} implies $m_0=\mathrm{b}_x(\mathcal{A})$ and $n_0=\mathrm{b}_y(\mathcal{A})$. Thus, it makes sense to write
\begin{equation}\label{e:lowerpara}
  \mathcal{A}=\mathrm{L}_x(m_0,m_1,\ldots,m_{\mathrm{b}_y(\mathcal{A})})=\mathrm{L}_y(n_0, n_1, \ldots, n_{\mathrm{b}_x(\mathcal{A})}).
\end{equation}
A simple observation shows that the lower set in Subfigure (b) of Fig. \ref{f:1} is
$$
\mathrm{L}_x(4, 3, 3, 2, 1, 0, 0, 0)=\mathrm{L}_y(7, 4, 3, 2, 0).
$$
Moreover, from \eqref{e:lower} we can deduce that both $m_0, \ldots, m_{\mathrm{b}_y(\mathcal{A})}$ and $n_0, \ldots, n_{\mathrm{b}_x(\mathcal{A})}$ are monotonically decreasing sequences. Furthermore, if they are strictly monotonically decreasing, then we say that $\mathcal{A}$ is $x$-\emph{strict} (resp. $y$-\emph{strict}) lower.

As index sets, the lower sets in $\mathbb{N}_0^2$ are used to label Cartesian sets in $\mathbb{F}^2$ as follows.

\begin{definition}\textup{\cite{LZD2012}}\label{d:lowerset}
A finite nonempty set
$\Xi\subset \mathbb{F}^2$ of distinct points is \emph{Cartesian} if and only if there exists
a lower set $\mathcal{A}\subset \mathbb{N}_0^2$ and two injective functions $\mathrm{x, y}:\mathbb{N}_0\rightarrow \mathbb{F}$ such that $\Xi$ can be written as
\begin{equation}\label{e:lowerset}
\Xi=\left\{\left(\mathrm{x}(i), \mathrm{y}(j)\right)\in \mathbb{F}^2: (i, j)\in \mathcal {A}\right\}.
\end{equation}
$\Xi$ is also called $\mathcal{A}$-\emph{Cartesian}.
\end{definition}

Subfigure (a) of Fig. \ref{f:1} illustrates a Cartesian set that is labelled by the lower set mentioned above.

Given a finite nonempty set $\Xi\subset \mathbb{F}^2$ of distinct points. Set
$$
\pi_x(\Xi):=\{\pi_x(\xi): \xi\in \Xi\}\quad\mbox{ and }\quad \pi_y(\Xi):=\{\pi_y(\xi): \xi\in \Xi\}
$$
as the first and the second projection maps on $\mathbb{F}^2$ respectively, namely
$$
    \pi_x: \mathbb{F}^2 \rightarrow \mathbb{F}: (x, y)\mapsto x\quad\mbox{ and }\quad
    \pi_y: \mathbb{F}^2 \rightarrow \mathbb{F}: (x, y)\mapsto y.
$$

Recall the point-sorting strategy of Farr-Gao algorithm, $\Xi$ can be decomposed vertically and horizontally as
\begin{equation}\label{e:decom}
  \begin{aligned}
    \Xi=&\bigcup_{\bar{x}\in\pi_x(\Xi)} \Xi\cap \{x=\bar{x}\}=:\bigcup_{i=0}^{\#\pi_x(\Xi)-1} \Xi_i^y\\
       =&\bigcup_{\bar{y}\in\pi_y(\Xi)} \Xi\cap \{y=\bar{y}\}=:\bigcup_{j=0}^{\#\pi_y(\Xi)-1} \Xi_j^x,
\end{aligned}
\end{equation}
where $\#\Xi_0^y\geq\cdots\geq \#\Xi_{\#\pi_x(\Xi)-1}^y$ and $\#\Xi_0^x\geq\cdots\geq \#\Xi_{\#\pi_y(\Xi)-1}^x$. Subfigure (a) of Fig.\ref{f:1} displays the decompositions of a Cartesian set.

In \cite{Cra2004}, two particular lower sets in $\mathbb{N}_0^2$
\begin{equation}\label{e:sxsy}
  \begin{aligned}
  S_x(\Xi):=&\mathrm{L}_x(\#\Xi_0^x-1, \ldots, \#\Xi_{\#\pi_y(\Xi)-1}^x-1),\\
  S_y(\Xi):=&\mathrm{L}_y(\#\Xi_0^y-1, \ldots, \#\Xi_{\#\pi_x(\Xi)-1}^y-1)
\end{aligned}
\end{equation}
are constructed from $\Xi$ (see (b) of Fig.\ref{f:1} for example), which reflect the distribution of $\Xi$ in certain sense, and the following criterion for Cartesian sets in $\mathbb{F}^2$ is offered as well.

\begin{figure}[!htbp]
\centering
\subfigure[$\Xi=\bigcup_{i=0}^4 \Xi_i^y=\bigcup_{j=0}^7 \Xi_j^x$]{\includegraphics[width=6cm,height=6cm]{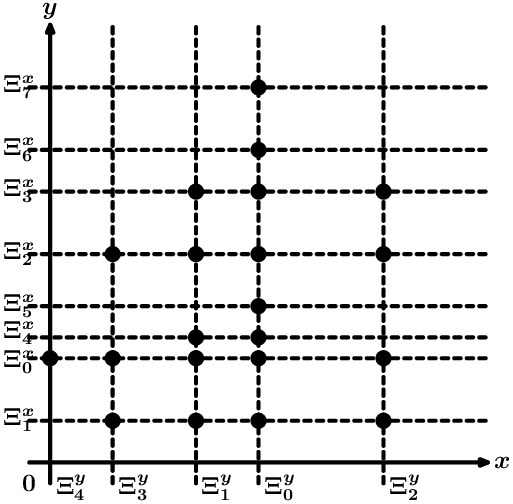}}
\subfigure[$S_y(\Xi)$]{\includegraphics[width=6cm,height=6cm]{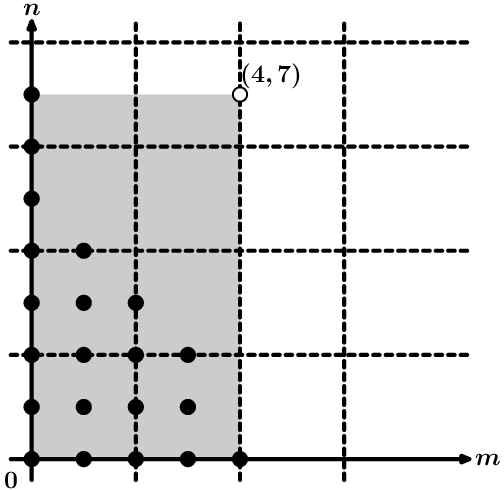}}

\caption{Cartesian set $\Xi$, its decompositions, and $S_y(\Xi)$}\label{f:1}  
\end{figure}

\begin{theorem}\textup{\cite{Cra2004}}\label{t:lowercar}
A finite nonempty set $\Xi\subset \mathbb{F}^2$ is Cartesian if and only if
$S_x(\Xi)=S_y(\Xi)$.
\end{theorem}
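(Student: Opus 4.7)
The plan is to prove the two implications separately: the ``$\Xi$ Cartesian $\Rightarrow S_x(\Xi)=S_y(\Xi)$'' direction by direct computation, and the converse via a combinatorial rigidity argument for $(0,1)$-matrices with prescribed marginals.

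For sufficiency, write $\Xi = \{(\mathrm{x}(i),\mathrm{y}(j)) : (i,j)\in\mathcal{A}\}$ with $\mathcal{A} = \mathrm{L}_x(m_0,\ldots,m_{\mathrm{b}_y(\mathcal{A})}) = \mathrm{L}_y(n_0,\ldots,n_{\mathrm{b}_x(\mathcal{A})})$. Since $\mathcal{A}$ is lower, the vertical slice at $\mathrm{x}(i)$ has cardinality exactly $n_i+1$, and the sequence $(n_i+1)$ is already nonincreasing; hence the sorted decomposition in (\ref{e:decom}) reproduces $(n_i+1)_i$, and (\ref{e:sxsy}) gives $S_y(\Xi) = \mathrm{L}_y(n_0,\ldots,n_{\mathrm{b}_x(\mathcal{A})}) = \mathcal{A}$. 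The symmetric computation on horizontal slices, using $(m_j+1)_j$, yields $S_x(\Xi) = \mathcal{A}$ as well, so the two agree.

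For necessity, enumerate $\pi_x(\Xi) = \{u_0,\ldots,u_{k-1}\}$ and $\pi_y(\Xi) = \{v_0,\ldots,v_{\ell-1}\}$ in the sorted orders of (\ref{e:decom}) and form the incidence matrix $A\in\{0,1\}^{k\times\ell}$ by $A_{ij}=1 \Leftrightarrow (u_i,v_j)\in\Xi$, whose row sums $(s_i)$ and column sums $(t_j)$ are nonincreasing. Reading (\ref{e:sxsy}) backwards, the lower set $S_y(\Xi)$ has column heights $(s_i)$ while $S_x(\Xi)$ has row lengths $(t_j)$, so the hypothesis $S_x(\Xi)=S_y(\Xi)$ translates precisely into the statement that $(s_i)$ and $(t_j)$ are conjugate partitions. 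Setting $\mathcal{A}:=S_y(\Xi)$, it suffices to prove that $A$ is the Ferrers shape of $(s_i)$, i.e.\ $A_{ij}=1 \Leftrightarrow (i,j)\in\mathcal{A}$; any injective extensions $\mathrm{x},\mathrm{y}\colon\mathbb{N}_0\to\mathbb{F}$ of $i\mapsto u_i$, $j\mapsto v_j$ then exhibit $\Xi$ as $\mathcal{A}$-Cartesian.

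The rigidity claim I will prove by infinite descent. A hypothetical ``1 outside Ferrers'' at $(i_0,j_0)$ (meaning $j_0\geq s_{i_0}$) forces a ``0 inside Ferrers'' in row $i_0$ at some $(i_0,j_1)$ with $j_1<s_{i_0}$, since row $i_0$ carries only $s_{i_0}$ ones. The conjugacy identity $t_{j_1}=\#\{i:s_i>j_1\}$ then forces column $j_1$ to host a further ``1 outside Ferrers'' at some $(i_1,j_1)$ with $s_{i_1}\leq j_1<j_0$: otherwise all $t_{j_1}$ ones of column $j_1$ would lie in rows with $s_i>j_1$, yet row $i_0$ is such a row and contributes none. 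Iterating produces a strictly decreasing sequence $j_0>j_1>j_2>\cdots$ in $\mathbb{N}_0$, a contradiction; the cardinality identity $\sum_i s_i=|\mathcal{A}|$ then rules out ``0 inside Ferrers'' as well, so $A$ coincides with the Ferrers shape. I expect the main obstacle to be the bookkeeping in this reformulation step: verifying that the seemingly asymmetric identity $S_x(\Xi)=S_y(\Xi)$ really does encode the conjugate-partition condition under the two parametrisations in (\ref{e:lowerpara}), and that the required ``$0$'' and ``$1$'' at each descent step are guaranteed by the marginals rather than by any ad hoc arrangement of $A$.
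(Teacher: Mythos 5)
Your proof is correct, and it is worth noting that the paper itself offers no proof of this statement: Theorem \ref{t:lowercar} is imported from \cite{Cra2004} and used as a black box (both in Corollary \ref{c:asxsy} and at the end of the proof of Theorem \ref{t:lower}). Your sufficiency computation is exactly the observation recorded in Corollary \ref{c:asxsy}: injectivity of $\mathrm{x},\mathrm{y}$ makes the sorted line counts of an $\mathcal{A}$-Cartesian set equal to $(n_i+1)_i$ and $(m_j+1)_j$ from \eqref{e:lowerpara}, so $S_x(\Xi)=S_y(\Xi)=\mathcal{A}$. Your necessity argument is a genuinely self-contained route: translating $S_x(\Xi)=S_y(\Xi)$ via \eqref{e:sxsy} into the statement that the row-sum and column-sum sequences of the incidence matrix are conjugate partitions is exactly right, and your infinite-descent step (a ``one outside the Ferrers shape'' forces a ``zero inside'' in its row, which by the conjugacy identity $t_{j_1}=\#\{i:s_i>j_1\}$ forces a new ``one outside'' in a strictly smaller column) is sound, with the final counting $\sum_i s_i=\#\Xi=\#\mathcal{A}$ correctly eliminating zeros inside the shape; this is precisely the uniqueness (extremal) case of the Gale--Ryser theorem. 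Compared with the paper, which proves its new criterion (Theorem \ref{t:lower}) by an induction reducing to the cited Crainic criterion, your argument makes that criterion itself self-contained by an elementary counting/descent in the same combinatorial spirit, and it isolates the purely combinatorial content (rigidity of $0$-$1$ matrices with conjugate marginals) from the field $\mathbb{F}$. One minor point, which is an artifact of Definition \ref{d:lowerset} rather than of your argument: extending $i\mapsto u_i$, $j\mapsto v_j$ to injective maps on all of $\mathbb{N}_0$ is impossible when $\mathbb{F}$ is finite, so injectivity should be (and in effect only needs to be) required on the finitely many indices actually used.
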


Theorem \ref{t:lowercar}, Definition \ref{d:lowerset}, and \eqref{e:sxsy} yield the following corollary immediately.

\begin{corollary}\label{c:asxsy}
If a finite nonempty set $\Xi\subset \mathbb{F}^2$ is $\mathcal{A}$-Cartesian, then $\mathcal{A}=S_x(\Xi)=S_y(\Xi)$. Consequently, \eqref{e:lowerset} can be rewritten as
\begin{align}
    \Xi=&\left\{\left(\mathrm{x}(i), \mathrm{y}(j)\right)\in \mathbb{F}^2: (i, j)\in S_x(\Xi)\right\}\label{e:newlowersetx}\\
       =&\left\{\left(\mathrm{x}(i), \mathrm{y}(j)\right)\in \mathbb{F}^2: (i, j)\in S_y(\Xi)\right\}\label{e:newlowersety}.
\end{align}

\end{corollary}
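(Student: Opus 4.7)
The plan is to first invoke Theorem \ref{t:lowercar} to get $S_x(\Xi)=S_y(\Xi)$ for free, and then show the single equality $\mathcal{A}=S_y(\Xi)$, from which the rest follows. The rewriting of \eqref{e:lowerset} is then just a substitution: once $\mathcal{A}=S_x(\Xi)=S_y(\Xi)$ is established, replacing $\mathcal{A}$ in the set-builder notation gives \eqref{e:newlowersetx} and \eqref{e:newlowersety} verbatim.

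For the core equality $\mathcal{A}=S_y(\Xi)$, I would write $\mathcal{A}=\mathrm{L}_y(n_0,n_1,\ldots,n_{\mathrm{b}_x(\mathcal{A})})$ as in \eqref{e:lowerpara}. Using the injectivity of the coordinate functions $\mathrm{x},\mathrm{y}$ from Definition \ref{d:lowerset}, the points in \eqref{e:lowerset} are genuinely distinct and, for each fixed index $i\in\{0,\ldots,\mathrm{b}_x(\mathcal{A})\}$, the vertical fiber of $\Xi$ over $x=\mathrm{x}(i)$ is exactly
\[
\Xi\cap\{x=\mathrm{x}(i)\}=\{(\mathrm{x}(i),\mathrm{y}(j)): 0\leq j\leq n_i\},
\]
so this fiber has cardinality $n_i+1$. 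Here I use the lower-set property \eqref{e:lower} to guarantee that the full column $\{0,1,\ldots,n_i\}$ of second coordinates actually occurs in $\mathcal{A}$, not just some subset. Because $\mathcal{A}$ is lower, the sequence $n_0\geq n_1\geq\cdots\geq n_{\mathrm{b}_x(\mathcal{A})}$ is monotonically decreasing, which matches exactly the ordering convention for the decomposition in \eqref{e:decom}. Hence $\#\Xi_i^y=n_i+1$ for each $i$, and feeding this into \eqref{e:sxsy} yields
\[
S_y(\Xi)=\mathrm{L}_y(n_0,n_1,\ldots,n_{\mathrm{b}_x(\mathcal{A})})=\mathcal{A}.
\]

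The main obstacle, though a mild one, is bookkeeping around the sorting convention in \eqref{e:decom}: one has to verify that the ordering of the vertical fibers induced by the lower-set structure of $\mathcal{A}$ (indexing by $i=0,1,\ldots,\mathrm{b}_x(\mathcal{A})$ with decreasing $n_i$) agrees with the decreasing-cardinality ordering used to build $S_y(\Xi)$ and that $\#\pi_x(\Xi)=\mathrm{b}_x(\mathcal{A})+1$. Both follow from the monotonicity of $n_0,\ldots,n_{\mathrm{b}_x(\mathcal{A})}$ observed after \eqref{e:lowerpara} and from the injectivity of $\mathrm{x}$. Finally, the equality $\mathcal{A}=S_x(\Xi)$ is obtained either by the symmetric argument applied to horizontal fibers and $m_0,\ldots,m_{\mathrm{b}_y(\mathcal{A})}$, or, more economically, by combining $\mathcal{A}=S_y(\Xi)$ with $S_x(\Xi)=S_y(\Xi)$ from Theorem \ref{t:lowercar}.
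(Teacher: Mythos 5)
Your proof is correct and follows the same route the paper implicitly indicates: use Theorem~\ref{t:lowercar} for $S_x(\Xi)=S_y(\Xi)$, then compute $S_y(\Xi)$ directly from Definition~\ref{d:lowerset} and \eqref{e:sxsy} by counting vertical fibers. The fiber count $\#(\Xi\cap\{x=\mathrm{x}(i)\})=n_i+1$ is exactly right (injectivity of $\mathrm{y}$ gives distinctness, the lower-set property gives the full column), and your observation that $\#\pi_x(\Xi)=\mathrm{b}_x(\mathcal{A})+1$ via injectivity of $\mathrm{x}$ and the decreasing-cardinality convention of \eqref{e:decom} being compatible with $n_0\geq\cdots\geq n_{\mathrm{b}_x(\mathcal{A})}$ closes the bookkeeping the paper leaves tacit.
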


Unfortunately, it is difficult to extend Theorem \ref{t:lowercar} to three and higher dimensions. Therefore, we give the following criterion that extends to higher dimensions naturally.

\begin{theorem}\label{t:lower}
A finite nonempty set $\Xi\subset \mathbb{F}^2$ with decompositions \eqref{e:decom} is
Cartesian if and only if
\begin{equation}\label{e:pix}
    \pi_x(\Xi_0^x) \supseteq \pi_x(\Xi_1^x) \supseteq \cdots \supseteq \pi_x(\Xi_{\#\pi_y(\Xi)-1}^x)
\end{equation}
or
\begin{equation}\label{e:piy}
  \pi_y(\Xi_0^y) \supseteq \pi_y(\Xi_1^y) \supseteq \cdots \supseteq \pi_y(\Xi_{\#\pi_x(\Xi)-1}^y).
\end{equation}
\end{theorem}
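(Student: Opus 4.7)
The plan is to exploit the lower-set parametrization \eqref{e:lowerpara} to read off the $x$-projections of horizontal slices directly from the combinatorial data in the ($\Rightarrow$) direction, and to reverse-engineer such a parametrization from a given nesting chain in the ($\Leftarrow$) direction.

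For the ($\Rightarrow$) direction, assume $\Xi$ is $\mathcal{A}$-Cartesian via injections $(\mathrm{x},\mathrm{y})$, and write $\mathcal{A}=\mathrm{L}_x(m_0,m_1,\ldots,m_{\mathrm{b}_y(\mathcal{A})})$. The horizontal slice of $\Xi$ at $y=\mathrm{y}(j)$ has $x$-projection $\{\mathrm{x}(0),\mathrm{x}(1),\ldots,\mathrm{x}(m_j)\}$, and since $m_0\geq m_1\geq\cdots\geq m_{\mathrm{b}_y(\mathcal{A})}$ by the monotonicity built into \eqref{e:lowerpara}, these projections are automatically nested as $j$ grows. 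Because the sizes $m_j+1$ already decrease with $j$, reindexing the slices to match the size-sorted order required by \eqref{e:decom} preserves the nesting (ties among equal sizes produce equal projections by nesting), which yields \eqref{e:pix}; the symmetric argument on vertical slices yields \eqref{e:piy}.

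For the ($\Leftarrow$) direction, suppose \eqref{e:pix} holds; the case of \eqref{e:piy} is handled by swapping the roles of $x$ and $y$. Let $k_j:=\#\Xi_j^x=\#\pi_x(\Xi_j^x)$, so $k_0\geq k_1\geq\cdots$. Using the chain condition, I enumerate $\pi_x(\Xi_0^x)$ as $\mathrm{x}(0),\ldots,\mathrm{x}(k_0-1)$ by first listing the innermost slice $\pi_x(\Xi_{\#\pi_y(\Xi)-1}^x)$ in an arbitrary order and then, moving outward, appending the newly appearing elements of $\pi_x(\Xi_{j-1}^x)\setminus\pi_x(\Xi_j^x)$, so that every $\pi_x(\Xi_j^x)$ equals the initial segment $\{\mathrm{x}(0),\ldots,\mathrm{x}(k_j-1)\}$. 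After extending $\mathrm{x}$ injectively to $\mathbb{N}_0$ and setting $\mathrm{y}(j)$ to be the common $y$-coordinate shared by all points in $\Xi_j^x$ (also extended injectively), define $\mathcal{A}:=\{(i,j)\in\mathbb{N}_0^2:0\leq j\leq\#\pi_y(\Xi)-1,\ 0\leq i\leq k_j-1\}$. The monotonicity of $(k_j)$ makes $\mathcal{A}$ lower (if $(i,j)\in\mathcal{A}$ and $j'\leq j$, then $k_{j'}\geq k_j>i$, so $(i',j')\in\mathcal{A}$ for any $i'\leq i$), and the identity $\pi_x(\Xi_j^x)=\{\mathrm{x}(0),\ldots,\mathrm{x}(k_j-1)\}$ forces $\Xi=\{(\mathrm{x}(i),\mathrm{y}(j)):(i,j)\in\mathcal{A}\}$, so $\Xi$ is $\mathcal{A}$-Cartesian.

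The only delicate point I anticipate is the enumeration step: I must argue that a \emph{single} ordering of $\pi_x(\Xi_0^x)$ can be chosen so that every $\pi_x(\Xi_j^x)$ is simultaneously an initial segment. The nesting $\pi_x(\Xi_0^x)\supseteq\cdots\supseteq\pi_x(\Xi_{\#\pi_y(\Xi)-1}^x)$ guarantees that at each stage the set of newly appearing elements is exactly $\pi_x(\Xi_{j-1}^x)\setminus\pi_x(\Xi_j^x)$, of the right cardinality $k_{j-1}-k_j$, so the construction terminates correctly. This makes the enumeration more a matter of bookkeeping than a genuine obstacle, and I expect the whole proof to be short.
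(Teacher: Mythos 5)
Your proof is correct, and the ($\Rightarrow$) direction is essentially the same as the paper's (including the small care with ties: the $\mathrm{L}_x$-parametrization already orders slices by weakly decreasing size, and equal sizes force equal projections by the nesting, so any reindexing demanded by \eqref{e:decom} preserves the chain). The ($\Leftarrow$) direction, however, takes a genuinely different route. The paper argues indirectly: it writes $S_x(\Xi)=\mathrm{L}_y(n_0,\ldots,n_{\#\Xi_0^x-1})$, proves $\#\pi_x(\Xi)=\#\Xi_0^x$ by contradiction, and then runs an induction showing $n_h=\#\Xi_h^y-1$ for all $h$, concluding $S_x(\Xi)=S_y(\Xi)$ and invoking Theorem~\ref{t:lowercar} (Cra2004's criterion). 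You instead construct the Cartesian structure directly from Definition~\ref{d:lowerset}: the chain lets you fix one enumeration of $\pi_x(\Xi_0^x)$ under which each $\pi_x(\Xi_j^x)$ is the initial segment of length $k_j$, the index set $\mathcal{A}=\{(i,j):0\le j\le \#\pi_y(\Xi)-1,\ 0\le i<k_j\}$ is lower because the $k_j$ are weakly decreasing by \eqref{e:decom}, and $\Xi=\{(\mathrm{x}(i),\mathrm{y}(j)):(i,j)\in\mathcal{A}\}$ falls out from $\Xi_j^x=\{(\mathrm{x}(i),\mathrm{y}(j)):0\le i<k_j\}$. Your route is shorter, constructive, and self-contained, at the cost of not exhibiting the equality $S_x(\Xi)=S_y(\Xi)$; the paper's route is longer but records precisely how the projection-nesting criterion reduces to the earlier $S_x=S_y$ criterion, which fits the expository arc linking Theorem~\ref{t:lowercar}, Corollary~\ref{c:asxsy}, and the tower-set criterion Theorem~\ref{t:towerc}.
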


\begin{proof}
  Assume that $\Xi$ is $\mathcal{A}$-Cartesian satisfying \eqref{e:lowerset} where $\mathcal{A}$ can be represented as \eqref{e:lowerpara} that together with \eqref{e:lowerset} implies $\mathrm{b}_y(\mathcal{A})=\#\pi_y(\Xi)-1$ and
$$
   \pi_x(\Xi_j^x)=\{\mathrm{x}(0), \mathrm{x}(1), \ldots, \mathrm{x}({m_j})\},\quad j=0, \ldots, b_y(\mathcal{A}).
$$
Since $m_0\geq m_1\geq \cdots\geq m_{b_y(\mathcal{A})}\geq 0$, \eqref{e:pix} follows. \eqref{e:piy} can be proved in like manner.

Conversely, we suppose that \eqref{e:pix} holds and that lower sets $S_x(\Xi)$ and $S_y(\Xi)$ have the expression \eqref{e:sxsy}. Then there exists a unique sequence $(n_0, n_1, \ldots,n_{\#\Xi_0^x-1})\in \mathbb{N}_0^{\#\Xi_0^x}$ such that
\begin{equation}\label{e:sxly}
    S_x(\Xi)=\mathrm{L}_y(n_0, n_1, \ldots,n_{\#\Xi_0^x-1})
\end{equation}
where $n_0=\#\pi_y(\Xi)-1$.

Next, we shall verify that $\#\pi_x(\Xi)=\#\Xi_0^x$, namely we can cover $\Xi$ by exactly
$\#\Xi_0^x$ vertical lines. Prove this by contradiction. It is evident from $\#\pi_x(\Xi)\geq \#\Xi_0^x$ that the equality fails only when $\#\pi_x(\Xi)> \#\Xi_0^x$, i.e., there exists at least one point
$(\bar{x}, \bar{y})\in \Xi$ such that $\bar{x}\notin \pi_x(\Xi_0^x)$. However, $(\bar{x}, \bar{y})\in \Xi$ simply means that there exists some $0<j\leq \#\pi_y(\Xi)-1$ such that $(\bar{x}, \bar{y})\in \Xi_j^x$. Thus \eqref{e:pix} implies that $\bar{x}\in \pi_x(\Xi_j^x)\subseteq \pi_x(\Xi_0^x)$ which contradicts $\bar{x}\notin \pi_x(\Xi_0^x)$.

In the rest of the proof, we will use induction on $h$ to show that
\begin{equation}\label{e:nn}
    n_h=\#\Xi_h^y-1,\quad h=0, \ldots, \#\pi_x(\Xi)-1,
\end{equation}
which leads to $S_x(\Xi)=S_y(\Xi)$ immediately. When
$h=0$, for every $(\bar{x}, \bar{y})\in \Xi_{\#\pi_y(\Xi)-1}^x$, it follows from \eqref{e:pix} that $\bar{x}\in \pi_x(\Xi_{\#\pi_y(\Xi)-1}^x)\subset \pi_x(\Xi_j^x), 0\leq j\leq \#\pi_y(\Xi)-2$, which means that $\#\{\Xi\cap \{x=\bar{x}\}\}\geq \#\pi_y(\Xi)$. But $\#\{\Xi\cap \{x=\bar{x}\}\}\leq \#\pi_y(\Xi)$ is trivial, we have $\#\Xi_0^y=\#\{\Xi\cap \{x=\bar{x}\}\}=\#\pi_y(\Xi)=n_0+1$, namely \eqref{e:nn}
is true for $h=0$.

Now assume \eqref{e:nn} for $0\leq h \leq k< \#\pi_x(\Xi)-1$, i.e., $n_h=\#\Xi_h^y-1, h=0, \ldots, k$. It turns out that there exist distinct $\bar{x}_0, \bar{x}_1, \ldots, \bar{x}_k\in \pi_x(\Xi)$ such that $\#\{\Xi\cap\{x=\bar{x}_h\}\}=n_h+1, 0\leq h \leq k$. Thus, for every $0\leq h \leq k$, $n_h\geq n_{k+1}$ implies $\bar{x}_h\in \pi_x(\Xi_{n_{k+1}}^x)$. Since $(k+1, n_{k+1})\in S_x(\Xi)$, there exists at least one point
$(\bar{x}_{k+1}, \bar{y}_{k+1})\in \Xi_{n_{k+1}}^x$ that is not in $\Xi_j^y, j=0, \ldots, k$. By \eqref{e:pix}, a similar argument leads to $\#\{\Xi\cap \{x=\bar{x}_{k+1}\}\}\geq n_{k+1}+1$ which implies $\#\Xi_{k+1}^y\geq n_{k+1}+1$. On the other side, it follows from induction hypothesis that every point in $\Xi_{k+1}^y$ belongs to some $\Xi_j^x, 0\leq j\leq n_{k+1}$, which implies that $\#\Xi_{k+1}^y\leq n_{k+1}+1$, therefore we have $\#\Xi_{k+1}^y=n_{k+1}+1$, namely \eqref{e:nn} holds for $h=k+1$. Theorem \ref{t:lowercar} immediately implies that $\Xi$ is Cartesian.

Swapping the roles of $x$ and $y$, the other statement can be proved similarly.

\end{proof}

As mentioned in Section \ref{s:int}, \cite{Sau2004} provides the Gr\"{o}bner \'{e}scalier of the vanishing ideal of a Cartesian set in theory. In view of a later application,  we restate the result only in case $d=2$.

\begin{theorem}\textup{\cite{Sau2004}}\label{t:Sauer}
Let $\Xi\subset \mathbb{F}^2$ be an $\mathcal{A}$-Cartesian set. Then Gr\"{o}bner \'{e}scalier $\mathcal{N}_\prec(\mathcal {I}(\Xi))$
w.r.t. any monomial order $\prec$ is identical to
\begin{equation}\label{e:lowerbas}
\mathfrak{M}_\mathcal{A}:=\{x^iy^j: (i, j)\in \mathcal {A}\}.
\end{equation}
\end{theorem}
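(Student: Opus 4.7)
The plan is to exhibit, for every ``corner'' $(k,l)$ of the lower set $\mathcal{A}$ (i.e., $(k,l)\notin\mathcal{A}$ but $(k-1,l)\in\mathcal{A}$ or $k=0$, and $(k,l-1)\in\mathcal{A}$ or $l=0$), an explicit element of $\mathcal{I}(\Xi)$ whose leading monomial under \emph{every} admissible $\prec$ is $x^ky^l$. Together with a dimension count, this will force $\mathcal{N}_\prec(\mathcal{I}(\Xi))=\mathfrak{M}_\mathcal{A}$.

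Concretely, I would invoke Corollary \ref{c:asxsy} along with the parametrization $\mathcal{A}=\mathrm{L}_y(n_0,\ldots,n_{\mathrm{b}_x(\mathcal{A})})$, so that $\Xi=\{(\mathrm{x}(i),\mathrm{y}(j)):0\leq i\leq \mathrm{b}_x(\mathcal{A}),\,0\leq j\leq n_i\}$, and then for each corner $(k,l)$ define
$$
g_{k,l}(x,y):=\prod_{i=0}^{k-1}(x-\mathrm{x}(i))\,\cdot\,\prod_{j=0}^{l-1}(y-\mathrm{y}(j)),
$$
with empty products read as $1$. To check $g_{k,l}\in\mathcal{I}(\Xi)$, I would pick an arbitrary $(\mathrm{x}(i_0),\mathrm{y}(j_0))\in\Xi$ and split cases: if $i_0<k$, the first product vanishes at $x=\mathrm{x}(i_0)$; if $i_0\geq k$, then necessarily $k\leq\mathrm{b}_x(\mathcal{A})$, and the corner inequality $n_k<l$ combined with the monotonicity $n_{i_0}\leq n_k$ gives $j_0\leq n_{i_0}<l$, so the second product vanishes at $y=\mathrm{y}(j_0)$. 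Since every linear factor $(x-\mathrm{x}(i))$ and $(y-\mathrm{y}(j))$ has leading monomial $x$ respectively $y$ under any admissible order, it follows at once that $\mathrm{LM}_\prec(g_{k,l})=x^ky^l$, independent of $\prec$.

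To finish, note that because $\mathcal{A}$ is a lower set, every monomial $x^iy^j\notin\mathfrak{M}_\mathcal{A}$ is divisible by $x^ky^l$ for some corner $(k,l)$; combined with $g_{k,l}\in\mathcal{I}(\Xi)$ and \eqref{e:nf}, this yields $\mathcal{N}_\prec(\mathcal{I}(\Xi))\subseteq\mathfrak{M}_\mathcal{A}$. The reverse inclusion is then forced by the dimension count $\#\mathcal{N}_\prec(\mathcal{I}(\Xi))=\dim_\mathbb{F}\Pi^2/\mathcal{I}(\Xi)=\#\Xi=\#\mathcal{A}=\#\mathfrak{M}_\mathcal{A}$. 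The only mild obstacle I anticipate is the bookkeeping around the case $i_0\geq k$ and the two degenerate corners $(0,\mathrm{b}_y(\mathcal{A})+1)$ and $(\mathrm{b}_x(\mathcal{A})+1,0)$ (for which one of the two products is empty); everything else is routine.
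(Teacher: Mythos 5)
Your argument is correct. Note, however, that the paper does not prove this statement at all: Theorem \ref{t:Sauer} is quoted from \cite{Sau2004}, so there is no in-paper proof to match. Your proposal supplies a self-contained, elementary proof, and it checks out: for a corner $(k,l)$ of $\mathcal{A}=\mathrm{L}_y(n_0,\ldots,n_{\mathrm{b}_x(\mathcal{A})})$ the polynomial $g_{k,l}=\prod_{i=0}^{k-1}(x-\mathrm{x}(i))\prod_{j=0}^{l-1}(y-\mathrm{y}(j))$ does vanish on $\Xi$ (your case split is sound: $i_0<k$ kills the $x$-factor, while $i_0\ge k$ forces $k\le\mathrm{b}_x(\mathcal{A})$ and $j_0\le n_{i_0}\le n_k<l$, killing the $y$-factor, and the two axis corners $(\mathrm{b}_x(\mathcal{A})+1,0)$, $(0,\mathrm{b}_y(\mathcal{A})+1)$ fall into exactly one of these cases), its leading monomial is $x^ky^l$ under every monomial order since each linear factor has leading monomial $x$ or $y$, every monomial outside $\mathfrak{M}_\mathcal{A}$ is divisible by some corner monomial (the minimal elements of the complement of a lower set are precisely the corners), and the cardinality argument $\#\mathcal{N}_\prec(\mathcal{I}(\Xi))=\dim_{\mathbb{F}}\Pi^2/\mathcal{I}(\Xi)=\#\Xi=\#\mathcal{A}$ closes the gap (injectivity of $\mathrm{x},\mathrm{y}$ gives $\#\Xi=\#\mathcal{A}$, and separator polynomials give surjectivity of evaluation over any field). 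Compared with simply invoking \cite{Sau2004}, your route buys transparency of the order-independence (the $g_{k,l}$ are order-independent by construction, so they in fact exhibit a universal Gr\"obner basis for $\mathcal{I}(\Xi)$ via Buchberger's criterion or the dimension count), it is close in spirit to the paper's own Newton polynomials $\phi_{ij}^x$ of Theorem \ref{t:phi^x} specialized to identity permutations, and it extends verbatim to $d$-variate Cartesian sets; what it does not give is the finer interpolation-theoretic information (degree-reducing interpolation operators) contained in the cited source.
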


Theorem \ref{t:Sauer} indicates that an $\mathcal{A}$-Cartesian set in $\mathbb{F}^2$ has the advantage that the Gr\"{o}bner \'{e}scalier of its vanishing ideal can be easily obtained from the structure of $\mathcal{A}$. Nevertheless, Theorem \ref{t:lower} illustrates that the distribution of a Cartesian set in $\mathbb{F}^2$ is highly restricted. Naturally, we wonder if there exists another type of finite nonempty sets with ``freer" distribution and \eqref{e:lowerbas}-like property.

\begin{definition}\label{d:tower}
  Keep the notation above. A finite nonempty set $\Xi$ in $\mathbb{F}^2$ is termed $x$-\emph{tower} (resp. $y$-\emph{tower}) if $S_x(\Xi)$ is $x$-strict \textup{(}resp. $S_y(\Xi)$ is $y$-strict\textup{)} and there exist two injective functions $\mathrm{x, y}:\mathbb{N}_0\rightarrow \mathbb{F}$ as well as $\mathrm{b}_y(S_x(\Xi))+1$ \textup{(}resp. $\mathrm{b}_x(S_y(\Xi))+1$\textup{)} permutations $\sigma_0^x, \ldots, \sigma_{\mathrm{b}_y(S_x(\Xi))}^x$ \textup{(}resp. $\sigma_0^y, \ldots, \sigma_{\mathrm{b}_x(S_y(\Xi))}^y$\textup{)} of set $\{0, 1, \ldots, \mathrm{b}_x(S_x(\Xi))\}$ \textup{(}resp. $\{0, 1, \ldots, \mathrm{b}_y(S_y(\Xi))\}$\textup{)} such that $\Xi$ can be written as
\begin{align}
    \Xi :=& \{(\mathrm{x}(\sigma_j^x(i)),\mathrm{y}(j)): (i,j)\in S_x(\Xi) \}\label{e:towerx}\\
    \mbox{\textup{(}resp. }\Xi :=& \{(\mathrm{x}(i),\mathrm{y}(\sigma_j^y(j))): (i,j)\in S_y(\Xi) \}\textup{)}\label{e:towery}.
\end{align}
\end{definition}

Fix the injective functions $\mathrm{x}$ and $\mathrm{y}$. Comparing \eqref{e:towerx} with \eqref{e:newlowersetx}, we find that if the permutations in \eqref{e:towerx} are all identical, then \eqref{e:towerx} is same as \eqref{e:newlowersetx} in form. Assume that $(i, j)\in S_x(\Xi)$. If $\Xi\subset \mathbb{F}^2$ is Cartesian, by \eqref{e:newlowersetx}, the corresponding point of $(i, j)$ in $\Xi$ must be $(\mathrm{x}(i), \mathrm{y}(j))$. But when $\Xi$ is $x$-tower, since $\sigma_j^x$ is arbitrary, the corresponding point of $(i, j)$ could be any one of $(\mathrm{x}(h), \mathrm{y}(j)), h=0, \ldots, \mathrm{b}_x(S_x(\Xi))$. Symmetrically, a $y$-tower set has the same behavior in vertical direction. Then $\mathrm{b}_x(S_x(\Xi))=\#\Xi_0^x-1$ and $\mathrm{b}_y(S_y(\Xi))=\#\Xi_0^y-1$ lead to the following criterion for tower sets instantly.

\begin{theorem}\label{t:towerc}
A finite nonempty set $\Xi\subset \mathbb{F}^2$ with decompositions \eqref{e:decom} is
$x$-tower \textup{(}resp. $y$-tower\textup{)} if and only if $S_x(\Xi)$ is $x$-strict \textup{(}resp. $S_y(\Xi)$ is $y$-strict\textup{)} and
\begin{align*}
    \pi_x(\Xi_0^x) &\supsetneq \pi_x(\Xi_j^x),\quad j=1,2,\ldots, \#\pi_y(\Xi)-1\\
    (\mbox{resp. } \pi_y(\Xi_0^y) &\supsetneq \pi_y(\Xi_i^y),\quad i=1,2,\ldots, \#\pi_x(\Xi)-1).
\end{align*}
\end{theorem}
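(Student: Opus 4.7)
The plan is to prove both directions directly from Definition \ref{d:tower}, working only with the $x$-tower case since the $y$-tower statement is obtained by swapping the roles of $x$ and $y$. Throughout, I will write $\mathcal{A} := S_x(\Xi) = \mathrm{L}_x(m_0, m_1, \ldots, m_{\mathrm{b}_y(\mathcal{A})})$ with $m_j = \#\Xi_j^x - 1$, and use the identity $\mathrm{b}_x(\mathcal{A}) = \#\Xi_0^x - 1 = m_0$ emphasized just before the theorem.

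For the forward implication, suppose $\Xi$ is $x$-tower with witnesses $\mathrm{x}, \mathrm{y}$ and permutations $\sigma_0^x, \ldots, \sigma_{\mathrm{b}_y(\mathcal{A})}^x$ of $\{0, 1, \ldots, m_0\}$. By definition $\mathcal{A}$ is already $x$-strict, so $m_0 > m_1 > \cdots > m_{\mathrm{b}_y(\mathcal{A})}$. From \eqref{e:towerx}, the slice $\Xi_j^x$ equals $\{(\mathrm{x}(\sigma_j^x(i)), \mathrm{y}(j)) : 0 \leq i \leq m_j\}$, so $\pi_x(\Xi_j^x) = \{\mathrm{x}(\sigma_j^x(0)), \ldots, \mathrm{x}(\sigma_j^x(m_j))\}$ is a subset of $\{\mathrm{x}(0), \ldots, \mathrm{x}(m_0)\} = \pi_x(\Xi_0^x)$ because $\sigma_0^x$ is a full permutation. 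Since $m_j + 1 < m_0 + 1$ for each $j \geq 1$, the strict cardinality gap forces the inclusion to be proper.

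For the converse, assume $\mathcal{A}$ is $x$-strict and $\pi_x(\Xi_0^x) \supsetneq \pi_x(\Xi_j^x)$ for all $j \geq 1$. I would first observe that these inclusions give $\pi_x(\Xi) = \pi_x(\Xi_0^x)$, so $\#\pi_x(\Xi) = m_0 + 1$; this rules out any $x$-coordinate outside the bottom row and is the step that makes the rest of the construction fit. Then I enumerate $\pi_x(\Xi_0^x) = \{\mathrm{x}(0), \ldots, \mathrm{x}(m_0)\}$ and $\pi_y(\Xi) = \{\mathrm{y}(0), \ldots, \mathrm{y}(\mathrm{b}_y(\mathcal{A}))\}$, with $\mathrm{y}(j)$ labelling $\Xi_j^x$, and extend both to injective maps $\mathbb{N}_0 \to \mathbb{F}$. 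For each $j$, the hypothesis $\pi_x(\Xi_j^x) \subseteq \pi_x(\Xi_0^x)$ lets me list $\Xi_j^x$ as $\{(\mathrm{x}(h_{j,0}), \mathrm{y}(j)), \ldots, (\mathrm{x}(h_{j,m_j}), \mathrm{y}(j))\}$ for distinct indices $h_{j,0}, \ldots, h_{j,m_j} \in \{0, \ldots, m_0\}$; defining $\sigma_j^x(i) := h_{j,i}$ for $0 \leq i \leq m_j$ and extending arbitrarily to a bijection of $\{0, \ldots, m_0\}$ produces exactly the representation \eqref{e:towerx}.

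There is no genuine obstacle here: $x$-strictness of $\mathcal{A}$ automatically supplies the cardinality slack in the forward direction, and proper inclusion in the converse is precisely the room needed to list $\pi_x(\Xi_j^x)$ as an $(m_j+1)$-subset of a fixed $(m_0+1)$-element set and complete it to a permutation. The only sanity check worth highlighting is the equality $\#\pi_x(\Xi) = m_0 + 1$ in the converse, which is required so that the $\sigma_j^x$ act on the correct ambient index set; it follows by exactly the contradiction argument used in the proof of Theorem \ref{t:lower}.
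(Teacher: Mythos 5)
Your proof is correct and takes the same approach as the paper: the paper presents Theorem~\ref{t:towerc} as following ``instantly'' from the comparison of \eqref{e:towerx} with \eqref{e:newlowersetx} given in the preceding paragraph, and your argument is simply the careful expansion of that sketch. In particular, you correctly unfold the definition to get proper inclusion of the projections in the forward direction (using $x$-strictness to supply the cardinality gap), and in the converse you reconstruct the injections and the permutations $\sigma_j^x$ from the given inclusions, with the observation $\pi_x(\Xi)=\pi_x(\Xi_0^x)$ being exactly what is needed to land on the ambient index set $\{0,\ldots,\mathrm{b}_x(S_x(\Xi))\}$.
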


Subfigure (a) of Fig. \ref{f:2} illustrates an $x$-tower set $\Xi$ with lower set $S_x(\Xi) = (11, 8, 6, 3, 1, 0)$. It is easy to check that the conditions in Theorem \ref{t:towerc} are satisfied.

Comparing Theorem \ref{t:towerc} with Theorem \ref{t:lower}, we find that in horizontal (resp. vertical) direction the distribution of an $x$-tower (resp. $y$-tower) set is ``freer" than a Cartesian set. Nonetheless, when it comes to the number of the points on each line, Cartesian sets are winners this time, because their $S_x(\Xi)$($=S_y(\Xi)$) are not restricted to be $x$-strict or $y$-strict.

By Theorem \ref{t:lower}, a tower set $\Xi\subset \mathbb{F}^2$ becomes a Cartesian set if and only if \eqref{e:pix} or \eqref{e:piy} is satisfied.
Conversely, it follows from Theorem \ref{t:towerc} that an $\mathcal{A}$-Cartesian set $\Xi\subset \mathbb{F}^2$ is $x$-tower(resp. $y$-tower) if and only if $\mathcal{A}$ is $x$-strict (resp. $y$-strict). Consequently, it turns out that the notions of Cartesian set and tower set in $\mathbb{F}^2$ are not mutually exclusive. Nevertheless, Theorem \ref{t:lower} and \ref{t:towerc} also implies that most tower sets are not Cartesian and vice versa. For example, set $\Xi$ in (a) of Fig. \ref{f:2} is $x$-tower but not Cartesian while set $\Xi'$ in (b) of Fig. \ref{f:2} is Cartesian but not $x$-tower or $y$-tower.

\begin{figure}[!htbp]
\centering
\subfigure[$\Xi$]{\includegraphics[width=6cm,height=6cm]{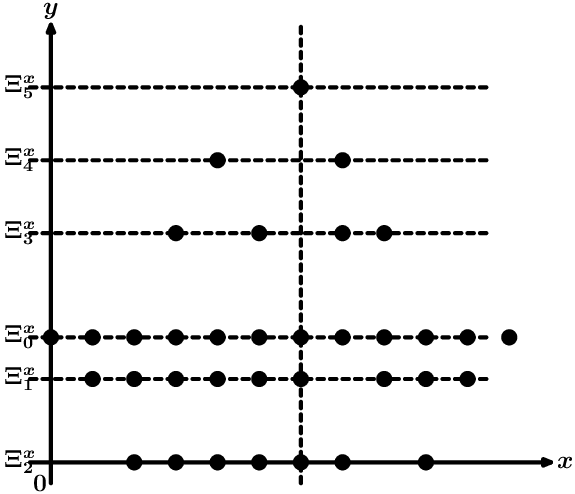}}
\subfigure[$\Xi'$]{\includegraphics[width=6cm,height=6cm]{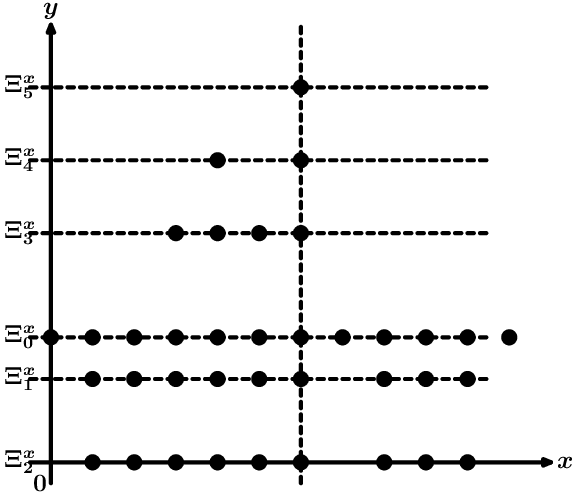}}

\caption{A non-Cartesian tower set and a non-tower Cartesian set}\label{f:2}  
\end{figure}

\section{Main Results}\label{s:result}


\subsection{Gr\"{o}bner \'{e}scalier}

We need the following lemma and definition before we give Theorem \ref{t:td} that theoretically provides the Gr\"{o}bner \'{e}scalier of the vanishing ideal of an $x$-tower(resp. $y$-tower) set in $\mathbb{F}^2$ w.r.t.
$\prec_{\mathrm{grlex}}$ (resp. $\prec_{\mathrm{grevlex}}$).

\begin{lemma}\textup{\cite{DZL2005}}\label{l:line}
  Let $\Xi=\{(x_0,y_0), (x_1, y_0), \ldots, (x_m,y_0)\}\subset \mathbb{F}^2$ be a set of
  $m+1$ distinct points on line $y=y_0$. Then
  $$
  \mathcal{I}(\Xi)=\langle (x-x_{0})(x-x_{1})\cdots(x-x_{m}),y-y_0\rangle.
  $$
\end{lemma}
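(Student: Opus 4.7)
The plan is to prove the two inclusions of the ideal equality separately, with the nontrivial direction handled by dividing out the linear factor $y-y_0$.

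For the easy inclusion $\supseteq$, I would just observe that both generators vanish identically on $\Xi$: $y-y_0$ vanishes because every point of $\Xi$ has second coordinate $y_0$, while $(x-x_0)(x-x_1)\cdots(x-x_m)$ vanishes because every $x$-coordinate in $\Xi$ is one of $x_0,\dots,x_m$. Since $\mathcal{I}(\Xi)$ is an ideal containing both generators, it contains the ideal they generate.

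For the nontrivial inclusion $\subseteq$, I would take an arbitrary $f\in\mathcal{I}(\Xi)$ and perform univariate polynomial division in the variable $y$ with divisor $y-y_0$ (treating $f$ as a polynomial in $y$ with coefficients in $\mathbb{F}[x]$). This yields a decomposition $f(x,y)=q(x,y)(y-y_0)+r(x)$ where $r\in\mathbb{F}[x]$ is the remainder, which is necessarily of degree $0$ in $y$. Evaluating this identity at each point $(x_k,y_0)\in\Xi$ kills the first term and forces $r(x_k)=0$ for $k=0,1,\dots,m$. Hence $r$ is a univariate polynomial in $\mathbb{F}[x]$ having the $m+1$ distinct roots $x_0,\dots,x_m$, so the monic polynomial $(x-x_0)(x-x_1)\cdots(x-x_m)$ divides $r(x)$ in $\mathbb{F}[x]$. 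Writing $r(x)=s(x)\cdot(x-x_0)(x-x_1)\cdots(x-x_m)$ and substituting back, we obtain
\[
  f(x,y)=q(x,y)\,(y-y_0)+s(x)\,(x-x_0)(x-x_1)\cdots(x-x_m),
\]
which exhibits $f$ as an element of $\langle (x-x_0)(x-x_1)\cdots(x-x_m),\,y-y_0\rangle$.

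There is essentially no obstacle here; the only subtle point worth checking carefully is that the division step really works with $y-y_0$ as divisor (it does, since $y-y_0$ is monic of degree one in $y$, so ordinary univariate division over the coefficient ring $\mathbb{F}[x]$ applies) and that the remainder $r$ is genuinely independent of $y$. Once these two observations are in place, the conclusion is immediate from the univariate fact that a polynomial with $m+1$ prescribed roots is divisible by the product of the corresponding linear factors.
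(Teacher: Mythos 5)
The paper does not prove this lemma; it cites it to the reference \cite{DZL2005} and uses it as a black box. Your proof is correct and is the standard argument: the inclusion $\supseteq$ is immediate from the vanishing of both generators on $\Xi$, and the inclusion $\subseteq$ follows by dividing an arbitrary $f\in\mathcal{I}(\Xi)$ by the monic (in $y$) divisor $y-y_0$ over the coefficient ring $\mathbb{F}[x]$, observing that the remainder $r\in\mathbb{F}[x]$ inherits the $m+1$ distinct roots $x_0,\dots,x_m$, and concluding that $\prod_{k}(x-x_k)$ divides $r$. The two points you flag as worth checking (that division by $y-y_0$ is legitimate and that the remainder is free of $y$) are exactly the right ones, and both hold because $y-y_0$ is monic of degree one in $y$. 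Nothing is missing.

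One small remark that could strengthen the write-up: the two generators you exhibit form a Gr\"{o}bner basis of $\mathcal{I}(\Xi)$ with respect to $\prec_{\mathrm{grlex}}$ (their leading monomials $x^{m+1}$ and $y$ are coprime, so Buchberger's first criterion applies), which is the form in which the lemma is actually used later in the paper when computing $\mathrm{LM}(G_N)$. Your elementary division argument establishes the ideal equality, which is what is stated, but noting the Gr\"{o}bner basis property would make the connection to the subsequent induction in Theorem~\ref{t:td} more transparent.
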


\begin{definition}\textup{\cite{CLO2007}}
  Fix a monomial order $\prec$ and let $F = \{f_1,\ldots, f_s\}\subset \Pi^d$ with $f_i\neq 0$. Given
$f, f'\in \Pi^d$, we say that $f$ \emph{reduces} to $f'$ modulo $f_1$ in one step, written
$$
f \xrightarrow{f_1} f'
$$
if and only if $\mathrm{LT}_\prec(f_1)$ divides a nonzero term $\mathrm{t}$ that appears in $f$ and
$$
f'=f-\frac{\mathrm{t}}{\mathrm{LT}_\prec(f_1)}f_1.
$$
Moreover, we say that $f$ \emph{reduces} to $f'$ modulo $F$, denoted
$$
f \xrightarrow{F}_+ f',
$$
if and only if there exist a sequence of indices $i_1, \ldots, i_t\in \{1,\ldots, s\}$  and a
sequence of polynomials $h_1, \ldots, h_{t-1}\in \Pi^d$ such that
$$
f\xrightarrow{f_{i_1}} h_1 \xrightarrow{f_{i_2}} h_2 \xrightarrow{f_{i_3}} \cdots \xrightarrow{f_{i_{t-1}}} h_{t-1} \xrightarrow{f_{i_t}} f'.
$$
\end{definition}

\begin{theorem}\label{t:td}
Given an $x$-tower\textup{(}resp. $y$-tower\textup{)} set $\Xi\subset \mathbb{F}^2$. The Gr\"{o}bner \'{e}scalier of vanishing ideal $\mathcal{I}(\Xi)$ w.r.t. $\prec_{\mathrm{grlex}}$ \textup{(}resp. $\prec_{\mathrm{grevlex}}$\textup{)} is $\mathfrak{M}_{S_x(\Xi)}$ \textup{(}resp. $\mathfrak{M}_{S_y(\Xi)}$\textup{)}.
\end{theorem}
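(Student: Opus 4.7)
The plan is to focus on the $x$-tower/$\prec_{\mathrm{grlex}}$ case; the $y$-tower/$\prec_{\mathrm{grevlex}}$ case is handled by the symmetric construction, noting that in two variables $\prec_{\mathrm{grevlex}}$ and $\prec_{\mathrm{grlex}}$ agree within each total degree. Write $S_x(\Xi)=\mathrm{L}_x(m_0,m_1,\ldots,m_{b_y})$ with $m_0>m_1>\cdots>m_{b_y}\ge 0$, and let $X_k:=\pi_x(\Xi_k^x)$ be the set of $x$-coordinates of row $k$. Since $\mathcal{I}(\Xi)$ is zero-dimensional and radical, $\dim_{\mathbb{F}}\Pi^2/\mathcal{I}(\Xi)=\#\Xi=\#S_x(\Xi)=|\mathfrak{M}_{S_x(\Xi)}|$, so it suffices to prove $\mathcal{N}_{\prec_{\mathrm{grlex}}}(\mathcal{I}(\Xi))\subseteq\mathfrak{M}_{S_x(\Xi)}$; equality then follows by cardinality. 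Because $\mathrm{LM}_{\prec_{\mathrm{grlex}}}(\mathcal{I}(\Xi))$ is upward closed, this reduces via \eqref{e:corner} to exhibiting, for every corner of $\mathfrak{M}_{S_x(\Xi)}$, a polynomial in $\mathcal{I}(\Xi)$ whose grlex leading monomial is that corner.

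A direct inspection shows the corners are $(m_k+1,k)$ for $0\le k\le b_y$, together with $(0,b_y+1)$. The two extreme corners are handled immediately: the polynomial $\prod_{l=0}^{b_y}(y-\mathrm{y}(l))\in\mathcal{I}(\Xi)$ contributes $(0,b_y+1)$, because $\pi_y(\Xi)\subseteq\{\mathrm{y}(0),\ldots,\mathrm{y}(b_y)\}$, while $\prod_{i=0}^{m_0}(x-\mathrm{x}(i))\in\mathcal{I}(\Xi)$ contributes $(m_0+1,0)$, because the $x$-tower hypothesis forces $\pi_x(\Xi)\subseteq X_0=\{\mathrm{x}(0),\ldots,\mathrm{x}(m_0)\}$.

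For each remaining corner $(m_k+1,k)$ with $1\le k\le b_y$, I construct $g_k\in\mathcal{I}(\Xi)$ with $\mathrm{LM}_{\prec_{\mathrm{grlex}}}(g_k)=x^{m_k+1}y^k$ by starting from
\[
  g_k^{(0)} \;:=\; \prod_{l=0}^{k-1}(y-\mathrm{y}(l))\cdot\prod_{x_0\in X_k}(x-x_0),
\]
which has leading monomial $x^{m_k+1}y^k$ of total degree $m_k+1+k$ and already vanishes on rows $0,1,\ldots,k$. Then, for $m=k+1,k+2,\ldots,b_y$ in this order, I add a correction
\[
  c_m \;:=\; \prod_{l=0}^{m-1}(y-\mathrm{y}(l))\cdot r_m(x),
\]
where $r_m\in\mathbb{F}[x]$ is the univariate Lagrange interpolant on the $m_m+1$ points of $X_m$ chosen so that $g_k^{(m-k-1)}+c_m$ vanishes on row $m$. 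The factor $\prod_{l=0}^{m-1}(y-\mathrm{y}(l))$ makes $c_m$ vanish on rows $0,1,\ldots,m-1$, so each correction leaves the rows already handled untouched, and the final polynomial $g_k:=g_k^{(b_y-k)}$ belongs to $\mathcal{I}(\Xi)$.

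The main obstacle, as I see it, is the degree bookkeeping proving that the leading monomial is preserved across all corrections. Since $r_m$ interpolates on $m_m+1$ distinct nodes, $\deg r_m\le m_m$; combined with the $x$-strictness of $S_x(\Xi)$, which forces $m_m\le m_k-(m-k)$ for $m>k$, this gives
\[
  \deg c_m \;\le\; m+m_m \;\le\; m_k+k \;<\; m_k+1+k \;=\; \deg g_k^{(0)}.
\]
Thus every correction contributes only monomials of total degree strictly less than that of $x^{m_k+1}y^k$, and the grlex leading monomial of $g_k$ remains $x^{m_k+1}y^k$ throughout the iteration. Assembling the polynomials produced for all corners gives $\mathbb{T}^2\setminus\mathfrak{M}_{S_x(\Xi)}\subseteq\mathrm{LM}_{\prec_{\mathrm{grlex}}}(\mathcal{I}(\Xi))$, and the dimension count above upgrades this to the required equality $\mathcal{N}_{\prec_{\mathrm{grlex}}}(\mathcal{I}(\Xi))=\mathfrak{M}_{S_x(\Xi)}$.
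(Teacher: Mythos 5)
Your proof is correct and takes a genuinely different route from the paper's. The paper proceeds by the primary decomposition $\mathcal{I}(\Xi)=\prod_{j=0}^{\nu}\mathcal{I}_j$ into horizontal lines and then carries out an inductive, hands-on computation of the reduced Gr\"{o}bner basis of each partial product $\prod_{j=0}^{N}\mathcal{I}_j$, tracking leading monomials through explicit ideal multiplication, multivariate division and Buchberger's S-pair criterion. You instead use the radical/zero-dimensional dimension count $\dim_{\mathbb{F}}\Pi^2/\mathcal{I}(\Xi)=\#\Xi=\#S_x(\Xi)$ to reduce the claim to a single containment $\mathbb{T}^2\setminus\mathfrak{M}_{S_x(\Xi)}\subseteq\mathrm{LM}_{\prec}(\mathcal{I}(\Xi))$, which in turn only requires exhibiting, for each corner of $\mathfrak{M}_{S_x(\Xi)}$, one vanishing polynomial with that leading monomial; you then manufacture those polynomials by a Newton-style row-by-row correction scheme in which the degree estimate $m_m\le m_k-(m-k)$ (forced by $x$-strictness) guarantees every correction has strictly smaller total degree and so cannot disturb the grlex leading term. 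This avoids Buchberger's criterion and the detailed ideal bookkeeping entirely, and is both shorter and closer in spirit to the Newton-basis constructions of Theorems \ref{t:phi^x}--\ref{t:phi^y}; the price is that, unlike the paper, it does not produce the reduced Gr\"{o}bner basis along the way, only its leading-term ideal. One small inaccuracy in your preamble: in two variables $\prec_{\mathrm{grlex}}$ and $\prec_{\mathrm{grevlex}}$ with the same variable precedence are not merely ``agreeing within each total degree'' -- they are literally the same order; the paper's $\prec_{\mathrm{grevlex}}$ is intended with the roles of $x$ and $y$ interchanged, and the correct way to dispatch the $y$-tower case is precisely the symmetric swap $x\leftrightarrow y$ that you already invoke, so the substance of your argument is unaffected.
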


\begin{proof}
We only offer the proof of the first statement. The second one can be verified in very like
fashion.

Retain all the notation established previously. Fix monomial order as $\prec_{\mathrm{grlex}}$, and for simplicity the symbol will be omitted in the rest of the proof where no confusion arises. Suppose that $x$-tower set $\Xi$ has the decompositions \eqref{e:decom}. For convenience, we set $m_j:=\#\Xi_j^x-1, j=0, \ldots, \nu$, where $\nu=\#\pi_y(\Xi)-1=\mathrm{b}_y(S_x(\Xi))$. Thus, \eqref{e:sxsy} implies $S_x(\Xi)=\mathrm{L}_x(m_0, m_1, \ldots, m_\nu)$. Fix $0\leq j\leq \nu$. By Lemma \ref{l:line}, the
ideal
\begin{align*}
    \mathcal{I}_j:=\mathcal{I}(\Xi_j^x)=&\mathcal{I}(\{(\mathrm{x}(\sigma_j^x(i)),\mathrm{y}(j)): (i,j)\in S_x(\Xi) \})\\
    =&\left\langle \prod_{(i, j)\in S_x(\Xi)} (x-\mathrm{x}(\sigma_j^x(i))), y-\mathrm{y}(j)\right\rangle.
\end{align*}
Obviously, $\mathcal{I}_j$'s are pairwise comaximal. Hence
$$
\mathcal{I}(\Xi)=\bigcap_{j=0}^\nu \mathcal{I}_j = \prod_{j=0}^\nu \mathcal{I}_j=\prod_{j=0}^\nu \left\langle \prod_{(i, j)\in S_x(\Xi)} (x-\mathrm{x}(\sigma_j^x(i))), y-\mathrm{y}(j)\right\rangle.
$$
Let $G_N$ be the reduced
Gr\"{o}bner basis for ideal $\prod_{j=0}^N \mathcal{I}_j$ w.r.t. $\prec_{\mathrm{grlex}}$,
$0\leq N \leq \nu$. We will use induction on $N$ to prove
\begin{equation}\label{e:9}
\mathrm{LM}(G_N) = \{x^{m_0+1},x^{m_1+1}y, \ldots, x^{m_N+1}y^N,y^{N+1}\}, \quad N=0, 1, \ldots, \nu.
\end{equation}
First of all,
$$
\mathrm{LM}\left(\prod_{(i, 0)\in S_x(\Xi)} (x-\mathrm{x}(\sigma_0^x(i)))\right)=x^{\#\Xi_0^x}=x^{m_0+1}
$$
 leads to (\ref{e:9}) immediately for $N=0$.

It follows from Theorem \ref{t:towerc} that $m_0 > m_1$ and $\pi_x(\Xi_1^x)\subsetneq
\pi_x(\Xi_0^x)$. Therefore, we have
\begin{align*}
&\mathcal{I}_0 \cdot \mathcal{I}_1\\
 = & \left\langle \prod_{(i, 0)\in S_x(\Xi)} (x-\mathrm{x}(\sigma_0^x(i))), y-\mathrm{y}(0)\right\rangle
\cdot
\left\langle \prod_{(i, 1)\in S_x(\Xi)} (x-\mathrm{x}(\sigma_1^x(i))), y-\mathrm{y}(1)\right\rangle\\
= & \Bigg\langle \prod_{(i, 0)\in S_x(\Xi)} (x-\mathrm{x}(\sigma_0^x(i))), \Bigg(\prod_{(i, 0)\in S_x(\Xi)} (x-\mathrm{x}(\sigma_0^x(i)))\Bigg) (y-\mathrm{y}(1)),\\
& \phantom{\Bigg\langle}\Bigg(\prod_{(i, 1)\in S_x(\Xi)} (x-\mathrm{x}(\sigma_1^x(i)))\Bigg)(y-\mathrm{y}(0)), \prod_{j=0}^1(y-\mathrm{y}(j))\Bigg\rangle\\
= & \Bigg\langle \prod_{(i, 0)\in S_x(\Xi)} (x-\mathrm{x}(\sigma_0^x(i))), \Bigg(\prod_{(i, 1)\in S_x(\Xi)} (x-\mathrm{x}(\sigma_1^x(i)))\Bigg)(y-\mathrm{y}(0)), \prod_{j=0}^1(y-\mathrm{y}(j))\Bigg\rangle
\end{align*}
where the last equality holds by
$$
\prod_{(i, 0)\in S_x(\Xi)} (x-\mathrm{x}(\sigma_0^x(i)))\Bigg|\Bigg(\prod_{(i, 0)\in S_x(\Xi)} (x-\mathrm{x}(\sigma_0^x(i)))\Bigg) (y-\mathrm{y}(1)).
$$
Then $\mathrm{LM}(G_1) = \{x^{m_0+1},x^{m_1+1}y,y^2\}$ follows which means
that \eqref{e:9} holds for $N=1$.

Similarly, by $m_1>m_2$ and $\pi_x(\Xi_2^x)\subsetneq
\pi_x(\Xi_0^x)$, we obtain, after some easy
computations,
\begin{align*}
&\mathcal{I}_0 \cdot \mathcal{I}_1 \cdot \mathcal{I}_2\\
=& \Bigg\langle \prod_{(i, 0)\in S_x(\Xi)} (x-\mathrm{x}(\sigma_0^x(i))), \prod_{j=0}^2(y-\mathrm{y}(j)), \\
& \phantom{\Bigg\langle}\Bigg(\prod_{(i, 1)\in S_x(\Xi)} (x-\mathrm{x}(\sigma_1^x(i)))\Bigg)\Bigg(\prod_{(i, 2)\in S_x(\Xi)} (x-\mathrm{x}(\sigma_2^x(i)))\Bigg)(y-\mathrm{y}(0)),\\
&\phantom{\Bigg\langle}\Bigg(\prod_{(i, 1)\in S_x(\Xi)} (x-\mathrm{x}(\sigma_1^x(i)))\Bigg)(y-\mathrm{y}(0))(y-\mathrm{y}(2)),\\
&\phantom{\Bigg\langle}\Bigg(\prod_{(i, 2)\in S_x(\Xi)} (x-\mathrm{x}(\sigma_2^x(i)))\Bigg)\prod_{j=0}^1(y-\mathrm{y}(j))\Bigg\rangle\\
=: & \left\langle g_0^{(2)}, g_1^{(2)}, g_2^{(2)}, g_3^{(2)}, g_4^{(2)} \right\rangle.
\end{align*}
We let $\hat{q}, \hat{r}\in \Pi^1$ be the quotient and
the remainder of the division of $\prod_{(i, 1)\in S_x(\Xi)} (x-\mathrm{x}(\sigma_1^x(i)))$ by
$\prod_{(i, 2)\in S_x(\Xi)} (x-\mathrm{x}(\sigma_2^x(i)))$ respectively, namely
$$
\prod_{(i, 1)\in S_x(\Xi)} (x-\mathrm{x}(\sigma_1^x(i)))=\hat{q} \Bigg(\prod_{(i, 2)\in S_x(\Xi)} (x-\mathrm{x}(\sigma_2^x(i)))\Bigg) + \hat{r}.
$$
Denote the remainder of $g_3^{(2)}$ w.r.t. $g_4^{(2)}$ by $\bar{g}_3^{(2)}$. One can
check readily that
$$
\bar{g}_3^{(2)}=g_3^{(2)}-\hat{q} g_4^{(2)}.
$$
On the other hand,
\begin{align*}
    g_2^{(2)}=\frac{1}{\mathrm{y}(1)-\mathrm{y}(2)}\Bigg[&\Bigg(\prod_{(i, 2)\in S_x(\Xi)} (x-\mathrm{x}(\sigma_2^x(i)))\Bigg)g_3^{(2)}-\\
    &\Bigg(\prod_{(i, 1)\in S_x(\Xi)} (x-\mathrm{x}(\sigma_1^x(i)))\Bigg)g_4^{(2)}\Bigg]
\end{align*}
implies $g_2^{(2)} \xrightarrow{\{g_3^{(2)}, g_4^{(2)}\}}_+ 0 $. Consequently,
\begin{align*}
\mathcal{I}_0 \cdot \mathcal{I}_1 \cdot \mathcal{I}_2= & \left\langle\ g_0^{(2)}, g_1^{(2)},
\bar{g}_3^{(2)}, g_4^{(2)}\right\rangle.
\end{align*}
We claim that
$$
G_2=G'_2:=\left\{g_0^{(2)}, g_1^{(2)}, (\mathrm{y}(1)-\mathrm{y}(2))^{-1}\bar{g}_3^{(2)}, g_4^{(2)}\right\},
$$
where $\mathrm{y}(1)-\mathrm{y}(2)=\mathrm{LC}\left(\bar{g}_3^{(2)}\right)$, i.e., $(\mathrm{y}(1)-\mathrm{y}(2))^{-1}\bar{g}_3^{(2)}$ is
monic.

In fact, if $S(f, g)$ stands for the S-polynomial of polynomials $f, g\in
\Pi^2$, then $S\left(g_0^{(2)}, g_1^{(2)}\right)\xrightarrow{G'_2}_+ 0$ follows immediately because
$\mathrm{LM}\left(g_0^{(2)}\right)$ and $\mathrm{LM}\left(g_1^{(2)}\right)$ are relatively prime. Observing that
$\prod_{(i, 2)\in S_x(\Xi)} (x-\mathrm{x}(\sigma_2^x(i)))$ is a factor of $\prod_{(i, 0)\in S_x(\Xi)} (x-\mathrm{x}(\sigma_0^x(i)))$, we get
\begin{align*}
    &S\left(g_0^{(2)}, g_4^{(2)}\right)\\
    =&y^2g_0^{(2)}-x^{m_0-m_2}g_4^{(2)}\\
    =&y^2g_0^{(2)}-x^{m_0-m_2}g_4^{(2)}-\Bigg(\prod_{j=0}^1(y-\mathrm{y}(j))\Bigg)g_0^{(2)}+
     \Bigg(\prod_{j=0}^1(y-\mathrm{y}(j))\Bigg)g_0^{(2)}\\
    =&y^2g_0^{(2)}-x^{m_0-m_2}g_4^{(2)}-\Bigg(\prod_{j=0}^1(y-\mathrm{y}(j))\Bigg)g_0^{(2)}+
     \frac{\prod_{(i, 0)\in S_x(\Xi)} (x-\mathrm{x}(\sigma_0^x(i)))}
{\prod_{(i, 2)\in S_x(\Xi)} (x-\mathrm{x}(\sigma_2^x(i)))}g_4^{(2)}\\
    =&((\mathrm{y}(0)+\mathrm{y}(1))y-\mathrm{y}(0)\mathrm{y}(1))g_0^{(2)}+
    \left(\frac{\prod_{(i, 0)\in S_x(\Xi)} (x-\mathrm{x}(\sigma_0^x(i)))}
{\prod_{(i, 2)\in S_x(\Xi)} (x-\mathrm{x}(\sigma_2^x(i)))}-x^{m_0-m_2}\right)g_4^{(2)}.
\end{align*}
It is easy to see that $\mathrm{LM}\left(S\left(g_0^{(2)}, g_4^{(2)}\right)\right)=x^{m_0+1}y$. Moreover, a simple computation leads to:
\begin{align*}
    \mathrm{LM}\left(((\mathrm{y}(0)+\mathrm{y}(1))y-\mathrm{y}(0)\mathrm{y}(1))g_0^{(2)}\right)&=x^{m_0+1}y,\\
    \mathrm{LM}\left(\left(\frac{\prod_{(i, 0)\in S_x(\Xi)} (x-\mathrm{x}(\sigma_0^x(i)))}
{\prod_{(i, 2)\in S_x(\Xi)} (x-\mathrm{x}(\sigma_2^x(i)))}-x^{m_0-m_2}\right)g_4^{(2)}\right)&=x^{m_0}y^2,
\end{align*}
therefore
\begin{align*}
& \max_{\prec_{\mathrm{grlex}}}\Bigg(
\mathrm{LM}\left(((\mathrm{y}(0)+\mathrm{y}(1))y-\mathrm{y}(0)\mathrm{y}(1))g_0^{(2)}\right), \\
& \phantom{\max_{\prec_{\mathrm{grlex}}}\Bigg(}\mathrm{LM}\left(\left(\frac{\prod_{(i, 0)\in S_x(\Xi)} (x-\mathrm{x}(\sigma_0^x(i)))}
{\prod_{(i, 2)\in S_x(\Xi)} (x-\mathrm{x}(\sigma_2^x(i)))}-x^{m_0-m_2}\right)g_4^{(2)}\right)\Bigg)\\
=& x^{m_0+1}y=\mathrm{LM}\left(S\left(g_0^{(2)}, g_4^{(2)}\right)\right)
\end{align*}
implies $S\left(g_0^{(2)}, g_4^{(2)}\right)\xrightarrow{G_2'}_+ 0$. Similarly,
$$
    S\left(g_1^{(2)}, g_4^{(2)}\right)=\Bigg(x^{m_2+1}-\prod_{(i, 2)\in S_x(\Xi)} (x-\mathrm{x}(\sigma_2^x(i)))\Bigg)g_1^{(2)}-\mathrm{y}(2)g_4^{(2)}
$$
and $\mathrm{LM}\left(S\left(g_1^{(2)}, g_4^{(2)}\right)\right)=x^{m_2+1}y^2$ imply
\begin{align*}
&\max_{\prec_{\mathrm{grlex}}}\Bigg(\mathrm{LM}\Bigg(\Bigg(x^{m_2+1}-\prod_{(i, 2)\in S_x(\Xi)} (x-\mathrm{x}(\sigma_2^x(i)))\Bigg)g_1^{(2)}\Bigg),
                 \mathrm{LM}\left(-\mathrm{y}(2)g_4^{(2)}\right)\Bigg)\\
               =&\max_{\prec_{\mathrm{grlex}}}\left(x^{m_2}y^3, x^{m_2+1}y^2\right) = x^{m_2+1}y^2 = \mathrm{LM}\left(S\left(g_1^{(2)},
               g_4^{(2)}\right)\right),
\end{align*}
which means that $S\left(g_1^{(2)}, g_4^{(2)}\right)\xrightarrow{G_2'}_+ 0$.

In like manner, we can also show that
$$
S\left(g_0^{(2)}, (\mathrm{y}(1)-\mathrm{y}(2))^{-1}\bar{g}_3^{(2)}\right)\xrightarrow{G_2'}_+ 0,\quad S\left(g_1^{(2)},
(\mathrm{y}(1)-\mathrm{y}(2))^{-1}\bar{g}_3^{(2)}\right)\xrightarrow{G_2'}_+ 0.
$$
Hence, there only remains $S\left((\mathrm{y}(1)-\mathrm{y}(2))^{-1}\bar{g}_3^{(2)}, g_4^{(2)}\right)
\xrightarrow{G_2'}_+ 0$ to be verified. Actually, it can be deduced that
\begin{align*}
    &S\left((\mathrm{y}(1)-\mathrm{y}(2))^{-1}\bar{g}_3^{(2)}, g_4^{(2)}\right)\\
    =&\frac{\hat{r}}{\mathrm{y}(1)-\mathrm{y}(2)}g_1^{(2)}+\mathrm{y}(1)\cdot (\mathrm{y}(1)-\mathrm{y}(2))^{-1} \bar{g}_3^{(2)}+
\left(\hat{q}-x^{m_1-m_2}\right)g_4^{(2)}
\end{align*}
whose leading monomial is $x^{m_1+1}y$. Thus
\begin{align*}
    \mathrm{LM}\left(\frac{\hat{r}}{\mathrm{y}(1)-\mathrm{y}(2)}g_1^{(2)}\right)&=x^{m_2}y^3,\\
    \mathrm{LM}\left(\mathrm{y}(1)\cdot (\mathrm{y}(1)-\mathrm{y}(2))^{-1} \bar{g}_3^{(2)}\right)&=x^{m_1+1}y,
\end{align*}
and
$$
    \mathrm{LM}\left(\left(\hat{q}-x^{m_1-m_2}\right)g_4^{(2)}\right)=x^{m_1}y^2
$$
imply that $S\left((\mathrm{y}(1)-\mathrm{y}(2))^{-1}\bar{g}_3^{(2)}, g_4^{(2)}\right) \xrightarrow{G_2'}_+ 0$.

Now, these arguments lead to the conclusion that S-polynomial $S(g, g')\xrightarrow{G'_2}_+ 0$
for any two distinct $g, g' \in G'_2$. By Buchberger's S-pair criterion,
$G_2'$ is a Gr\"{o}bner basis for $\mathcal{I}_0 \cdot \mathcal{I}_1 \cdot \mathcal{I}_2$
w.r.t. $\prec_{\mathrm{grlex}}$. Moreover, for every $g\in G_2'$, it is evident that
\begin{enumerate}
    \item $\mathrm{LC}(g)=1$,
    \item No monomial of $g$ lies in $\langle
\mathrm{LT}(G_2'-\{g\})\rangle$,
\end{enumerate}
which means that $G_2'$ is reduced, namely \eqref{e:9} holds for $N=2$.

Now, assume (\ref{e:9}) for $N=k, 3\leq k<\nu$.
Without loss of generality, we suppose that $G_k = \left\{g_0^{(k)}, \ldots, g_{k+1}^{(k)}\right\}$ with
\begin{align*}
\mathrm{LM}\left(g_i^{(k)}\right)=&x^{m_i+1}y^i,\quad i=0,\ldots,k , \\
\mathrm{LM}\left(g_{k+1}^{(k)}\right)=& y^{k+1},
\end{align*}
which imply that
\begin{align*}
g_0^{(k)}&=\prod_{(i, 0)\in S_x(\Xi)} (x-\mathrm{x}(\sigma_0^x(i))),\\
g_{k+1}^{(k)}&=\prod_{j=0}^k(y-\mathrm{y}(j)).
\end{align*}
When $N=k+1$, since $\Xi$ is $x$-tower, we obtain
\begin{align*}
\prod_{j=0}^{k+1}\mathcal{I}_j=& (\prod_{j=0}^{k}\mathcal{I}_j)
\mathcal{I}_{k+1}\\
 =&\Big\langle g_0^{(k)}, \ldots, g_{k+1}^{(k)} \Big\rangle
\cdot \Bigg\langle \prod_{(i, k+1)\in S_x(\Xi)} (x-\mathrm{x}(\sigma_{k+1}^x(i))), y-\mathrm{y}(k+1) \Bigg\rangle \\
 = & \Bigg\langle g_0^{(k+1)}, g_0^{(k)}(y-\mathrm{y}(k+1)),\\
  & \phantom{\Big\langle} g_1^{(k)}\Bigg(\prod_{(i, k+1)\in S_x(\Xi)} (x-\mathrm{x}(\sigma_{k+1}^x(i)))\Bigg), g_1^{(k)}(y-\mathrm{y}(k+1)),\\
  &          \cdots \\
  & \phantom{\Big\langle} g_{k-1}^{(k)}\Bigg(\prod_{(i, k+1)\in S_x(\Xi)} (x-\mathrm{x}(\sigma_{k+1}^x(i)))\Bigg), g_{k-1}^{(k)}(y-\mathrm{y}(k+1)),\\
  & \phantom{\Big\langle} g_k^{(k)}\Bigg(\prod_{(i, k+1)\in S_x(\Xi)} (x-\mathrm{x}(\sigma_{k+1}^x(i)))\Bigg), g_k^{(k)}(y-\mathrm{y}(k+1)),\\
  & \phantom{\Big\langle} g_{k+1}^{(k)}\Bigg(\prod_{(i, k+1)\in S_x(\Xi)} (x-\mathrm{x}(\sigma_{k+1}^x(i)))\Bigg), g_{k+1}^{(k)}(y-\mathrm{y}(k+1)) \Bigg\rangle,
\end{align*}
where $g_0^{(k+1)}=g_0^{(k)}$ is obvious hence $g_0^{(k)}(y-\mathrm{y}(k+1))$ can be removed. By the induction hypothesis, we have
$$
    g_{k+2}^{(k+1)}:=g_{k+1}^{(k)}(y-\mathrm{y}(k+1))=\prod_{j=0}^{k+1}(y-\mathrm{y}(j)).
$$
We denote polynomial $g_{k+1}^{(k)}(\prod_{(i, k+1)\in S_x(\Xi)} (x-\mathrm{x}(\sigma_{k+1}^x(i))))$ by
$g_{k+1}^{(k+1)}$. It follows from the induction hypothesis that $\mathrm{LM}\left(g_{k+1}^{(k+1)}\right)=x^{m_{k+1}+1}y^{k+1}$. Set $E_1:=\left\{g_{k+1}^{(k+1)}\right\}$. Suppose that
$g_k^{(k)}(y-\mathrm{y}(k+1))\xrightarrow{E_1}_+ g_k^{(k+1)}$. Since $m_k > m_{k+1}$, we have
$$
\mathrm{LM}\left(g_k^{(k+1)}\right)=x^{m_k+1}y^k.
$$
Recall case $N=2$. It is easy to see that
$$
g_k^{(k)}\Bigg(\prod_{(i, k+1)\in S_x(\Xi)} (x-\mathrm{x}(\sigma_{k+1}^x(i)))\Bigg) \xrightarrow{F_1}_+ 0
$$
where $F_1:=\left\{g_{k+1}^{(k+1)}, g_k^{(k+1)}\right\}$, which means that $g_k^{(k)}(\prod_{(i, k+1)\in S_x(\Xi)} (x-\mathrm{x}(\sigma_{k+1}^x(i))))$ can be
removed from the original ideal basis.

Set $E_2:=F_1$,  and suppose that $g_{k-1}^{(k)}(y-\mathrm{y}(k+1)) \xrightarrow{E_2}_+
g_{k-1}^{(k+1)}$. We similarly deduce that
$\mathrm{LM}\left(g_{k-1}^{(k+1)}\right)=x^{m_{k-1}+1}y^{k-1}$ and
$$
g_{k-1}^{(k)}\Bigg(\prod_{(i, k+1)\in S_x(\Xi)} (x-\mathrm{x}(\sigma_{k+1}^x(i)))\Bigg) \xrightarrow{F_2}_+ 0
$$
where $F_2:=\left\{g_{k+1}^{(k+1)}, g_{k-1}^{(k+1)}\right\}$.

In this way, we construct two sequences $(E_1, E_2,\ldots, E_k)$ and $(F_1, F_2, \ldots,
F_k)$ such that
\begin{align*}
g_{i}^{(k)}(y-\mathrm{y}(k+1)) \xrightarrow{E_{k+1-i}}_+&
g_{i}^{(k+1)},\\
g_{i}^{(k)}\Bigg(\prod_{(i, k+1)\in S_x(\Xi)} (x-\mathrm{x}(\sigma_{k+1}^x(i)))\Bigg) \xrightarrow{F_{k+1-i}}_+& 0,
\end{align*}
where, for $i=1,\ldots,k$,
\begin{alignat*}{2}
E_i&=\left\{g_{k+2-i}^{(k+1)}, \ldots, g_{k+1}^{(k+1)}\right\},\\
F_i&=\left\{g_{k+1}^{(k+1)}, g_{k+1-i}^{(k+1)}\right\},\\
\mathrm{LM}\left(g_i^{(k+1)}\right)&=x^{m_i+1}y^i.
\end{alignat*}
Let $\bar{g}_i^{(k+1)}=\mathrm{LC}\left(g_i^{(k+1)}\right)^{-1} g_i^{(k+1)}, i=0, \ldots, k+2$. With similar
arguments used in $N=2$ case, we can finally prove that
$$
G_{k+1}=\left\{\bar{g}_0^{(k+1)}, \bar{g}_1^{(k+1)},\ldots,\bar{g}_{k+2}^{(k+1)}\right\},
$$
with $\mathrm{LM}\left(\bar{g}_i^{(k+1)}\right)=x^{m_i+1}y^i, i=0, \ldots, k+1$, and $\mathrm{LM}\left(\bar{g}_{k+2}^{(k+1)}\right)=y^{k+2}$,
namely \eqref{e:9} holds for $N=k+1$. Consequently, we have
$$
\mathrm{LM}(G_\nu) = \{x^{m_0+1},x^{m_1+1}y, \ldots, x^{m_\nu+1}y^\nu,y^{\nu+1}\}.
$$
Recalling \eqref{e:nf} and \eqref{e:sxsy}, we have
$$
\mathcal{N}(\mathcal{I}(\Xi))= \{x^iy^j: (i, j)\in
S_x(\Xi)\} = \mathfrak{M}_{S_x(\Xi)},
$$
which complete the proof.
\end{proof}

\subsection{Newton Basis}

The $\prec\mspace{-8mu}-$\emph{degree} of a nonzero polynomial $f\in \Pi^2$ (see \cite{dBo2007}), denoted by $\delta_\prec(f)$,
was defined to be $(i,j)\in \mathbb{N}_0^2$ satisfying
$$
x^iy^j=\mathrm{LM}_\prec(f).
$$
For every pair of polynomials $f, g\in \Pi^2$, if $\delta(f) \prec \delta(g)$ then
we say that $f$ is of \emph{lower $\prec\mspace{-8mu}-$degree} than $g$ and use the abbreviation
$$
f\prec g:=\delta(f) \prec \delta(g).
$$
In addition, $f\preceq g$ is interpreted as the $\prec\mspace{-8mu}-$degree of $f$ is lower than or equal to
that of $g$.

Given a finite nonempty set $\Xi=\{\xi^{(0)}, \xi^{(1)}, \ldots, \xi^{(\mu-1)}\}\subset \mathbb{F}^2$. For fixed monomial order $\prec$, the Gr\"{o}bner \'{e}scalier $\mathcal{N}_\prec(\mathcal{I}(\Xi))$ trivially forms the \emph{monomial basis} for $\mu$-dimensional $\mathbb{F}$-linear space $\mathcal{P}_\prec(\Xi):=\mathrm{Span}_\mathbb{F}\mathcal{N}_\prec(\mathcal{I}(\Xi))\subset \Pi^2$ that complements $\mathcal{I}(\Xi)$, i.e.
$$
\Pi^2=\mathcal{I}(\Xi)\oplus \mathcal{P}_\prec(\Xi).
$$
Moreover, if subset $\{p_0, p_1, \ldots, p_{\mu-1}\}\subset \mathcal{P}_\prec(\Xi)$, with $p_0 \prec p_1\prec \cdots \prec p_{\mu-1}$, satisfying
$$
p_j(\xi^{(i)})=\delta_{ij},\quad 0\leq i \leq j \leq \mu-1,
$$
then $\{p_0, p_1, \ldots, p_{\mu-1}\}$ is called a \emph{Newton interpolation basis} for $\mathcal{P}_\prec(\Xi)$.

Consequently, from Theorem \ref{t:td}, $\Xi$ is $x$-tower (resp. $y$-tower) implies that $\mathcal{P}_{\prec_{\mathrm{grlex}}}(\Xi)=\mathrm{Span}_\mathbb{F}\mathfrak{M}_{S_x(\Xi)}$(resp.  $\mathcal{P}_{\prec_{\mathrm{grevlex}}}(\Xi)=\mathrm{Span}_\mathbb{F}\mathfrak{M}_{S_y(\Xi)}$).
The next two theorems present Newton bases for $\mathcal{P}_{\prec_{\mathrm{grlex}}}(\Xi)$ and $\mathcal{P}_{\prec_{\mathrm{grevlex}}}(\Xi)$ respectively.

\begin{theorem}\label{t:phi^x}
Given an $x$-tower set $\Xi\subset \mathbb{F}^2$ that is expressed as \eqref{e:towerx}. Set polynomial
$$
\phi_{ij}^x:=c_{ij}^x \prod_{t=0}^{j-1}(y-\mathrm{y}(t))\prod_{s=0}^{i-1}(x-\mathrm{x}(\sigma_j^x(s))), \quad
(i,j)\in S_x(\Xi),
$$
where
$$
c_{ij}^x=\frac{1}{\prod_{t=0}^{j-1}(\mathrm{y}(j)-\mathrm{y}(t))\prod_{s=0}^{i-1}(\mathrm{x}(\sigma_j^x(i))
-\mathrm{x}(\sigma_j^x(s)))}\in
\mathbb{F}
$$ and the empty products are taken as 1.
Then for $(i,j), (m, n)\in S_x(\Xi)$ with $(i,j)\succeq_{\mathrm{inlex}} (m,n)$, we have
$$
\phi_{ij}^x\left(\mathrm{x}(\sigma_n^x(m)), \mathrm{y}(n)\right)=\delta_{(i,j), (m,n)},
$$
namely \begin{equation}\label{e:Qx}
\mathfrak{N}_{S_x(\Xi)}:=\{\phi_{ij}^x: (i, j)\in S_x(\Xi)\}
\end{equation}
forms a Newton interpolation basis for $\mathcal{P}_{\prec_{\mathrm{grlex}}}(\Xi)$.
\end{theorem}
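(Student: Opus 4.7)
The plan is to verify the claimed identity $\phi_{ij}^x(\mathrm{x}(\sigma_n^x(m)), \mathrm{y}(n)) = \delta_{(i,j),(m,n)}$ by direct substitution, and then to conclude that $\mathfrak{N}_{S_x(\Xi)}$ is a basis of $\mathcal{P}_{\prec_{\mathrm{grlex}}}(\Xi)$ via a dimension count. The underlying idea is the classical univariate-Newton cancellation argument applied line by line in the $y$-direction and then within each line in the $x$-direction.

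First, I would substitute $(\mathrm{x}(\sigma_n^x(m)), \mathrm{y}(n))$ into the product formula for $\phi_{ij}^x$ and split into cases along the inlex comparison. If $j > n$, the factor $\mathrm{y}(n) - \mathrm{y}(t)$ with $t = n$ appears in the $y$-product (since $n \leq j-1$), which immediately kills the whole expression. If $j = n$ and $i > m$, then $\sigma_n^x = \sigma_j^x$, so the $x$-product reads $\prod_{s=0}^{i-1}(\mathrm{x}(\sigma_j^x(m)) - \mathrm{x}(\sigma_j^x(s)))$; the factor with $s = m$ (which lies in the range $\{0, \ldots, i-1\}$ precisely because $m < i$) is $\mathrm{x}(\sigma_j^x(m)) - \mathrm{x}(\sigma_j^x(m)) = 0$, again yielding $0$. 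Finally, when $(i,j) = (m,n)$, no factor vanishes: injectivity of $\mathrm{y}$ handles the $y$-product, and injectivity of $\sigma_j^x$ together with that of $\mathrm{x}$ handles the $x$-product. The normalizing constant $c_{ij}^x$ is designed exactly so that the product collapses to $1$ in this case.

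To finish, I would note that each $\phi_{ij}^x$ is a polynomial of $x$-degree at most $i$ and $y$-degree at most $j$, so its support lies in $\{(a,b) : 0 \leq a \leq i,\ 0 \leq b \leq j\}$. Since $(i,j)\in S_x(\Xi)$ and $S_x(\Xi)$ is a lower set, this support is contained in $S_x(\Xi)$, so $\phi_{ij}^x \in \mathrm{Span}_\mathbb{F}\mathfrak{M}_{S_x(\Xi)} = \mathcal{P}_{\prec_{\mathrm{grlex}}}(\Xi)$ by Theorem \ref{t:td}. Ordering the indices of $S_x(\Xi)$ ascending in $\prec_{\mathrm{inlex}}$ and labeling the points of $\Xi$ and the polynomials $\phi_{ij}^x$ accordingly, the evaluation matrix $\bigl(\phi_{(i,j)}^x(\xi_{(m,n)})\bigr)$ is upper unitriangular by the identity just verified. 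Linear independence of $\mathfrak{N}_{S_x(\Xi)}$ is therefore immediate, and since $\#\mathfrak{N}_{S_x(\Xi)} = \#S_x(\Xi) = \#\Xi = \dim\mathcal{P}_{\prec_{\mathrm{grlex}}}(\Xi)$, it is a basis; the triangular structure is precisely the Newton interpolation property claimed.

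The main obstacle is notational rather than conceptual: one has to keep the polynomial index $(i,j)$ and the evaluation point index $(m,n)$ rigorously separate while tracking which permutation $\sigma_\bullet^x$ appears on each side of each subtraction. The decisive point that leverages the $x$-tower hypothesis is that, on a single horizontal slice $y = \mathrm{y}(j)$, all $x$-coordinates of $\Xi$ are described by the one permutation $\sigma_j^x$; this is what makes the case $j = n$ collapse by a univariate-Newton argument. Once this bookkeeping is in place, no further machinery is required.
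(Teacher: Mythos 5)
Your proof is correct and follows essentially the same route as the paper: the same case split on the $\prec_{\mathrm{inlex}}$ comparison ($j>n$ killed by the $y$-factor, $j=n,\ i>m$ killed by the $x$-factor via $\sigma_n^x=\sigma_j^x$, and the normalization giving $1$ on the diagonal). Your closing dimension-count and unitriangularity argument simply makes explicit the spanning claim the paper dismisses as ``easy to check,'' so no substantive difference remains.
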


\begin{proof}
Fix $(i, j)\in S_x(\Xi)$. If $(i, j)=(m, n)$, then
$$
\phi_{ij}^x(\mathrm{x}(\sigma_n^x(m)), \mathrm{y}(n))=c_{ij}^x\prod_{t=0}^{j-1}(\mathrm{y}(j)-\mathrm{y}(t))\prod_{s=0}^{i-1}(\mathrm{x}(\sigma_j^x(i))
-\mathrm{x}(\sigma_j^x(s)))\\
                     =c_{ij}^x/c_{ij}^x=1.
$$
Otherwise, $(i, j)\succneqq_{\mathrm{inlex}} (m, n)$ implies $j>n$ or $j=n$ and $i>m$. When
$j>n$, i.e., $j-1\geq n$,  we have
$$
\phi_{ij}^x\left(\mathrm{x}(\sigma_n^x(m)), \mathrm{y}(n)\right)=c_{ij}^x\prod_{t=0}^{j-1}(\mathrm{y}(n)-\mathrm{y}(t))\prod_{s=0}^{i-1}(\mathrm{x}(\sigma_n^x(m))
-\mathrm{x}(\sigma_j^x(s)))=0.
$$
If $j=n, i>m$, namely $i-1\geq m$, then
\begin{align*}
\phi_{ij}^x\left(\mathrm{x}(\sigma_n^x(m)), \mathrm{y}(n)\right)&=c_{ij}^x\prod_{t=0}^{j-1}(\mathrm{y}(n)-\mathrm{y}(t))\prod_{s=0}^{i-1}(\mathrm{x}(\sigma_n^x(m))
-\mathrm{x}(\sigma_j^x(s)))\\
&=c_{ij}^x\prod_{t=0}^{j-1}(\mathrm{y}(n)-\mathrm{y}(t))\prod_{s=0}^{i-1}(\mathrm{x}(\sigma_j^x(m))
-\mathrm{x}(\sigma_j^x(s)))\\
&=0,
\end{align*}
which leads to
$$
\phi_{ij}^x\left(\mathrm{x}(\sigma_n^x(m)), \mathrm{y}(n)\right)=0, \quad (i,j)\succ_{\mathrm{inlex}} (m,n),
$$
as desired. It is easy to check that
$\mathrm{Span}_\mathbb{F}\mathfrak{N}_x=\mathrm{Span}_\mathbb{F}\mathfrak{M}_{S_x(\Xi)}=\mathcal{P}_{\prec_{\mathrm{grlex}}}(\Xi)$.
\end{proof}

Similarly, we can prove the following theorem:

\begin{theorem}\label{t:phi^y}
Let $\Xi\subset \mathbb{F}^2$ be a $y$-tower set that is defined by \eqref{e:towery}. We let polynomial
$$
\phi_{ij}^y:=c_{ij}^y\prod_{s=0}^{i-1}(x-\mathrm{x}(s))\prod_{t=0}^{j-1}(y-\mathrm{y}(\sigma_i^y(t))),\quad
(i,j)\in S_y(\Xi),
$$
where
$$
c_{ij}^y=\frac{1}{\prod_{s=0}^{i-1}(\mathrm{x}(i)-\mathrm{x}(s))\prod_{t=0}^{j-1}(\mathrm{y}(\sigma_i^y(j))-
\mathrm{y}(\sigma_i^y(t)))}\in \mathbb{F}
$$
and the empty products are taken as 1. Then,
\begin{equation}\label{e:Qy}
\mathfrak{N}_{S_y(\Xi)}:=\{\phi_{ij}^y: (i, j)\in S_y(\Xi)\}
\end{equation}
is a Newton interpolation basis for $\mathcal{P}_{\prec_{\mathrm{grevlex}}}(\Xi)$ satisfying
$$
\phi_{ij}^y((\mathrm{x}(m), \mathrm{y}(\sigma_m^y(n))))=\delta_{(i,j),(m,n)}, \quad (i,j)\succeq_{\mathrm{lex}} (m,n).
$$
\end{theorem}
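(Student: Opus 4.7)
The plan is to mirror the proof of Theorem \ref{t:phi^x} with the roles of $x$ and $y$ (and correspondingly $\prec_{\mathrm{inlex}}$ and $\prec_{\mathrm{lex}}$) interchanged, since a $y$-tower is the reflection across the diagonal of an $x$-tower. The first step is to fix an arbitrary pair $(i,j),(m,n)\in S_y(\Xi)$ with $(i,j)\succeq_{\mathrm{lex}}(m,n)$ and directly evaluate $\phi_{ij}^y$ at the point $(\mathrm{x}(m),\mathrm{y}(\sigma_m^y(n)))\in\Xi$ in three cases:
\begin{itemize}
\item If $(i,j)=(m,n)$, then every linear factor in the two products of $\phi_{ij}^y(\mathrm{x}(i),\mathrm{y}(\sigma_i^y(j)))$ is exactly cancelled by the matching factor in $c_{ij}^y$, giving the value $1$.
\item If $i>m$, then $m\in\{0,1,\ldots,i-1\}$ forces the factor with $s=m$ in $\prod_{s=0}^{i-1}(x-\mathrm{x}(s))$ to vanish at $x=\mathrm{x}(m)$.
\item If $i=m$ and $j>n$, then $\sigma_m^y=\sigma_i^y$ and $n\in\{0,\ldots,j-1\}$ make the factor with $t=n$ in $\prod_{t=0}^{j-1}(y-\mathrm{y}(\sigma_i^y(t)))$ vanish at $y=\mathrm{y}(\sigma_i^y(n))$.
\end{itemize}
These three cases cover $(i,j)\succeq_{\mathrm{lex}}(m,n)$ and jointly establish the claimed interpolation identity.

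The second step is to promote $\mathfrak{N}_{S_y(\Xi)}$ to a Newton interpolation basis for $\mathcal{P}_{\prec_{\mathrm{grevlex}}}(\Xi)$. Because $\phi_{ij}^y$ is (up to a nonzero scalar) a product of $i$ monic linear factors in $x$ and $j$ monic linear factors in $y$, its $\prec_{\mathrm{grevlex}}$-leading monomial is $x^iy^j$. Hence the transition from $\mathfrak{N}_{S_y(\Xi)}$ to $\mathfrak{M}_{S_y(\Xi)}$ is unitriangular in the ordering induced by $\prec_{\mathrm{grevlex}}$, which makes $\mathfrak{N}_{S_y(\Xi)}$ linearly independent. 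A dimension count using $\#\mathfrak{N}_{S_y(\Xi)}=\#S_y(\Xi)=\#\Xi=\dim\mathcal{P}_{\prec_{\mathrm{grevlex}}}(\Xi)$ together with Theorem \ref{t:td} (which gives $\mathrm{Span}_{\mathbb{F}}\mathfrak{M}_{S_y(\Xi)}=\mathcal{P}_{\prec_{\mathrm{grevlex}}}(\Xi)$) then yields the spanning property.

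I do not anticipate a real obstacle: the argument is the formal symmetric dual of the one for Theorem \ref{t:phi^x} and reduces to bookkeeping on indices. The one place where care is required is the case $i=m$ in the interpolation step; there one must note that $\sigma_m^y$ and $\sigma_i^y$ coincide, because otherwise the $y$-product in $\phi_{ij}^y$ would be evaluated against the wrong permutation of the $\mathrm{y}$-values and the vanishing factor $\mathrm{y}(\sigma_i^y(n))-\mathrm{y}(\sigma_i^y(n))$ would not appear. Once that subtlety is handled, the proof is completely mechanical.
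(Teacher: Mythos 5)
Your proposal is correct and takes the same approach the paper intends: the paper only writes ``Similarly, we can prove the following theorem,'' leaving Theorem~\ref{t:phi^y} as the symmetric dual of Theorem~\ref{t:phi^x} with the roles of $x$ and $y$ (and of $\prec_{\mathrm{inlex}}$ and $\prec_{\mathrm{lex}}$) interchanged, and your three-case evaluation together with the triangular change-of-basis argument is precisely that dualization, with slightly more detail on the spanning step than the paper's ``easy to check.'' One small terminological nit: the transition matrix from $\mathfrak{M}_{S_y(\Xi)}$ to $\mathfrak{N}_{S_y(\Xi)}$ has diagonal entries $c_{ij}^y$ rather than $1$, so it is triangular with nonzero diagonal rather than unitriangular, but this changes nothing in the conclusion.
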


Now, we turn to $\prec_{\mathrm{lex}}$ and $\prec_{\mathrm{inlex}}$ cases. For every
finite nonempty set $\Xi\subset \mathbb{F}^2$, \cite{WZD2010} presents monomial and Newton interpolation bases for $\mathcal{P}_{\prec_{\mathrm{lex}}}(\Xi)$ and $\mathcal{P}_{\prec_{\mathrm{inlex}}}(\Xi)$. In the following, we restate the results with $\Xi$ limited to tower sets only.

\begin{lemma}\label{l:WZD}
Let $\Xi$ be an $x$-tower \textup{(}resp. $y$-tower\textup{)} set which is defined by \eqref{e:towerx} \textup{(}resp. \eqref{e:towery}\textup{)}. Then $\mathfrak{M}_{S_x(\Xi)}$ \textup{(}resp. $\mathfrak{M}_{S_y(\Xi)}$\textup{)} is the monomial basis and $\mathfrak{N}_{S_x(\Xi)}$ \textup{(}resp. $\mathfrak{N}_{S_y(\Xi)}$\textup{)} a Newton basis for $\mathcal{P}_{\prec_{\mathrm{lex}}}(\Xi)$ \textup{(}resp. $\mathcal{P}_{\prec_{\mathrm{inlex}}}(\Xi)$\textup{)}.
\end{lemma}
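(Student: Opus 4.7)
The lemma is a specialization of the general construction of \cite{WZD2010} to tower sets, so my plan is to (i) re-use the Newton-type polynomials $\phi_{ij}^x$ already introduced in Theorem \ref{t:phi^x}, and (ii) verify that under $\prec_{\mathrm{lex}}$ they continue to deliver the claimed monomial basis and Newton basis. The argument splits naturally into the monomial-basis part and the Newton-basis part.

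For the monomial-basis part, the plan is to observe that under $\prec_{\mathrm{lex}}$ (with $x\succ y$) every factor $(x-\mathrm{x}(\sigma_j^x(s)))$ contributes $x$ as its leading term and every factor $(y-\mathrm{y}(t))$ contributes $y$, so $\mathrm{LM}_{\prec_{\mathrm{lex}}}(\phi_{ij}^x)=x^iy^j$ for every $(i,j)\in S_x(\Xi)$, exactly as in the $\prec_{\mathrm{grlex}}$ case. Combined with the triangular delta property established in Theorem \ref{t:phi^x}, the family $\mathfrak{N}_{S_x(\Xi)}$ is linearly independent modulo $\mathcal{I}(\Xi)$, and since $\#S_x(\Xi)=\#\Xi=\dim \Pi^2/\mathcal{I}(\Xi)$ it is a vector-space basis of the quotient. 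Therefore $\mathfrak{M}_{S_x(\Xi)}$ is a monomial set of the right cardinality whose span complements $\mathcal{I}(\Xi)$.

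To promote this to the statement that $\mathfrak{M}_{S_x(\Xi)}$ is the Gr\"obner \'escalier w.r.t.\ $\prec_{\mathrm{lex}}$ (not merely some complement), I would rerun the induction of Theorem \ref{t:td} with $\prec_{\mathrm{grlex}}$ replaced by $\prec_{\mathrm{lex}}$, starting from the product decomposition $\mathcal{I}(\Xi)=\prod_j \mathcal{I}_j$ and showing, by induction on the number of rows $N$, that the reduced Gr\"obner basis $G_N$ has leading monomials $\{x^{m_0+1},x^{m_1+1}y,\ldots,x^{m_N+1}y^N,y^{N+1}\}$. The $x$-strictness of $S_x(\Xi)$ and the strict inclusion $\pi_x(\Xi_0^x)\supsetneq \pi_x(\Xi_j^x)$ from Theorem \ref{t:towerc} provide exactly the same combinatorial hypotheses used in Theorem \ref{t:td}; the leading monomials of the generators produced along the way are unchanged when the monomial order is switched from $\prec_{\mathrm{grlex}}$ to $\prec_{\mathrm{lex}}$, because the two orders agree on the relevant head terms of every $\phi$-like factor. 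With the Gr\"obner basis identification in hand, $\phi_{ij}^x\in \mathrm{Span}_{\mathbb{F}}\mathfrak{M}_{S_x(\Xi)}=\mathcal{P}_{\prec_{\mathrm{lex}}}(\Xi)$, and the Newton-basis property follows by enumerating the points of $\Xi$ in $\prec_{\mathrm{inlex}}$ order, which is precisely the order that makes the delta identity of Theorem \ref{t:phi^x} lower-triangular.

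The $y$-tower statement is obtained by swapping the roles of $x$ and $y$ throughout and invoking Theorem \ref{t:phi^y} in place of Theorem \ref{t:phi^x}. The main obstacle is the re-verification of the S-polynomial reductions in the parallel induction: while the combinatorics of the leading-monomial set is identical, one must check that the specific reductions $S(g,g')\xrightarrow{G'}_{+} 0$ used in Theorem \ref{t:td} still hold under $\prec_{\mathrm{lex}}$. I expect this to go through unchanged because the critical leading monomials of the generators are pure products $x^{m_j+1}y^j$ (respectively $y^{N+1}$), on which $\prec_{\mathrm{grlex}}$ and $\prec_{\mathrm{lex}}$ agree in the relevant comparisons.
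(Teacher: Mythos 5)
The paper does not prove Lemma~\ref{l:WZD} at all: it explicitly says it is ``restat[ing] the results'' of \cite{WZD2010} for tower sets, and the cited result there holds for arbitrary finite $\Xi\subset\mathbb{F}^2$, not only tower sets. So there is no proof in the paper to compare against; I can only assess your argument on its own terms.

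Your split of the claim into ``complement'' versus ``actual Gr\"obner \'escalier'' is exactly right, and the complement half is fine: $\mathrm{LM}_\prec(\phi^x_{ij})=x^iy^j$ for \emph{any} monomial order, the triangular $\delta$-identity of Theorem~\ref{t:phi^x} makes $\mathfrak{N}_{S_x(\Xi)}$ unisolvent, and the dimension count then forces $\mathrm{Span}\,\mathfrak{M}_{S_x(\Xi)}\oplus\mathcal{I}(\Xi)=\Pi^2$. The gap is in the escalier half. You propose to ``rerun the induction of Theorem~\ref{t:td}'' under $\prec_{\mathrm{lex}}$ and justify it by asserting that ``the leading monomials of the generators produced along the way are unchanged'' when $\prec_{\mathrm{grlex}}$ is replaced by $\prec_{\mathrm{lex}}$, and later that you ``expect this to go through unchanged.'' That assertion is precisely the content of the lemma and is not automatic: a reduced Gr\"obner basis element $g_j$ with $\mathrm{LM}_{\prec_{\mathrm{grlex}}}(g_j)=x^{m_j+1}y^j$ may, a priori, contain a tail monomial $x^iy^{j'}\in\mathfrak{M}_{S_x(\Xi)}$ with $i>m_j+1$ and $j'<j$, which is $\prec_{\mathrm{grlex}}$-smaller than $x^{m_j+1}y^j$ (lower total degree) but $\prec_{\mathrm{lex}}$-\emph{larger}. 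Ruling this out requires a genuine structural argument — e.g.\ restricting $g_j$ to each line $y=\mathrm{y}(j')$ with $j'<j$, noting that a polynomial in $x$ of degree $<m_{j'}+1$ that vanishes on the $m_{j'}+1$ points of $\Xi_{j'}^x$ is zero, and deducing $\deg_x g_j=m_j+1$. That fact, once established, shows the grlex reduced basis already has the claimed $\prec_{\mathrm{lex}}$ leading monomials without any re-run of the Buchberger machinery: combined with the complement property and a normal-form argument it immediately gives $\mathcal{N}_{\prec_{\mathrm{lex}}}(\mathcal{I}(\Xi))=\mathfrak{M}_{S_x(\Xi)}$. Your sketch hands you that key equality for free, and ``re-running the induction under $\prec_{\mathrm{lex}}$'' is also not a mechanical substitution, since the reduction $\xrightarrow{F}_+$ and hence the intermediate $g_i^{(k+1)}$ themselves depend on the order. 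So the approach is salvageable, but as written it asserts rather than proves the step on which the conclusion turns.
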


Combining Theorem \ref{t:td}--\ref{t:phi^y} and Lemma \ref{l:WZD}, we arrive at the following main theorem:

\begin{theorem}\label{t:final}
Let $\Xi$ be an $x$-tower \textup{(}resp. $y$-tower\textup{)} set that is defined by \eqref{e:towerx} \textup{(}resp. \eqref{e:towery}\textup{)}. Then $\mathfrak{M}_{S_x(\Xi)}$ \textup{(}resp. $\mathfrak{M}_{S_y(\Xi)}$\textup{)} is the monomial basis and $\mathfrak{N}_{S_x(\Xi)}$ \textup{(}resp. $\mathfrak{N}_{S_y(\Xi)}$\textup{)} a Newton interpolation basis for $\mathcal{P}_{\prec_{\mathrm{grlex}}}(\Xi)$ and $\mathcal{P}_{\prec_{\mathrm{lex}}}(\Xi)$ \textup{(}resp. $\mathcal{P}_{\prec_{\mathrm{grevlex}}}(\Xi)$ and $\mathcal{P}_{\prec_{\mathrm{inlex}}}(\Xi)$\textup{)}.
\end{theorem}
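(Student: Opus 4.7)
The plan is to observe that Theorem \ref{t:final} is a consolidation of four already-proven facts into a single uniform statement, so the proof reduces to assembling the correct ingredient for each of the four (tower type, monomial order) combinations. I would split according to which of the two tower types $\Xi$ belongs to, and then verify both the monomial basis claim and the Newton basis claim for each relevant order in turn.

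For an $x$-tower $\Xi$, the $\prec_{\mathrm{grlex}}$ half follows from Theorem \ref{t:td}, which identifies the Gr\"obner \'escalier $\mathcal{N}_{\prec_{\mathrm{grlex}}}(\mathcal{I}(\Xi))$ with $\mathfrak{M}_{S_x(\Xi)}$; since $\mathcal{P}_\prec(\Xi)$ is by definition the $\mathbb{F}$-span of its Gr\"obner \'escalier, this gives the monomial basis immediately, and Theorem \ref{t:phi^x} then supplies $\mathfrak{N}_{S_x(\Xi)}$ as a Newton interpolation basis for $\mathcal{P}_{\prec_{\mathrm{grlex}}}(\Xi)$. The $\prec_{\mathrm{lex}}$ half is handed to us directly by Lemma \ref{l:WZD}, which delivers both bases in one stroke.

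The $y$-tower case proceeds in an entirely symmetric fashion: Theorem \ref{t:td} now produces $\mathfrak{M}_{S_y(\Xi)}$ as the $\prec_{\mathrm{grevlex}}$-escalier and hence the monomial basis of $\mathcal{P}_{\prec_{\mathrm{grevlex}}}(\Xi)$; Theorem \ref{t:phi^y} provides the corresponding Newton basis $\mathfrak{N}_{S_y(\Xi)}$; and the $\prec_{\mathrm{inlex}}$ portion is again covered by Lemma \ref{l:WZD}.

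I do not expect any genuine obstacle, since all the substantive work lies in the preceding Theorems \ref{t:td}, \ref{t:phi^x}, \ref{t:phi^y} and in Lemma \ref{l:WZD}. The only point worth a moment's attention is the observation that the \emph{same} set $\mathfrak{N}_{S_x(\Xi)}$ (respectively $\mathfrak{N}_{S_y(\Xi)}$) legitimately serves as a Newton interpolation basis for two different monomial orders simultaneously; this is immediate once one notes that the triangular interpolation conditions established in Theorem \ref{t:phi^x} and Theorem \ref{t:phi^y} depend only on the indexing of the polynomials by their leading exponent vectors in $S_x(\Xi)$ or $S_y(\Xi)$, so the same family realizes the triangularity required by whichever admissible order is chosen. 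Thus the final theorem is obtained simply by quoting the four prior results side by side.
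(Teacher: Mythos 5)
Your proposal is correct and matches the paper exactly: the paper also establishes Theorem \ref{t:final} by simply combining Theorems \ref{t:td}, \ref{t:phi^x}, \ref{t:phi^y} and Lemma \ref{l:WZD}, with no further argument beyond what you state. Your closing remark about the same Newton family serving two orders is a harmless extra observation but not needed, since Lemma \ref{l:WZD} already asserts the $\prec_{\mathrm{lex}}$ and $\prec_{\mathrm{inlex}}$ cases directly.
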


\subsection{Reduced Gr\"{o}bner Basis and Timings}


Now, it's time for our improvement for Farr-Gao algorithm.


\begin{alg}\label{XTBM}

\textbf{Input}: A finite set $\Xi\subset \mathbb{F}^2$ and a fixed monomial order $\prec$ that is either $\prec_{\mathrm{grlex}}$ or $\prec_{\mathrm{lex}}$ \textup{(}resp. either $\prec_{\mathrm{grevlex}}$ or $\prec_{\mathrm{inlex}}$\textup{)}.


\textbf{Output}: The reduced Gr\"{o}bner basis for $\mathcal{I}(\Xi)$ w.r.t. $\prec$.


\indent \textbf{Step1.} Decompose $\Xi$ following \eqref{e:decom} and find an $x$-tower \textup{(}resp. $y$-tower\textup{)} subset $\Xi_\mathrm{T}$ of $\Xi$ as large as possible.
\\
\indent \textbf{Step2.} Obtain $S_x(\Xi_\mathrm{T})$ \textup{(}resp. $S_y(\Xi_\mathrm{T})$\textup{)} following \eqref{e:sxsy}, and finally express $\Xi_\mathrm{T}$ in form \eqref{e:towerx} \textup{(}resp. \eqref{e:towery}\textup{)}.
\\
\indent \textbf{Step3.} Construct list $P$ whose $h$-th entry $p_h$, $1\leq h\leq \#\Xi_\mathrm{T}$, is the $h$-th smallest element of $\Xi_\mathrm{T}$ in form \eqref{e:towerx} \textup{(}resp. \eqref{e:towery}\textup{)} w.r.t. increasing $\prec_{\mathrm{inlex}}$\textup{(}resp. $\prec_{\mathrm{lex}}$\textup{)} on $(i, j)$.
\\
\indent \textbf{Step4.} Compute set $M:=\mathfrak{M}_{S_x(\Xi_\mathrm{T})}$ \textup{(}resp. $\mathfrak{M}_{S_y(\Xi_\mathrm{T})}$\textup{)} and then set $C:= \mathcal{C}[M]$ by applying \eqref{e:lowerbas} and \eqref{e:corner} respectively.
\\
\indent \textbf{Step5.} Construct list $N$ whose $k$-th entry $q_k$, $1\leq k\leq \#\Xi_\mathrm{T}$, is the $k$-th smallest element of $\mathfrak{N}_{S_x(\Xi_\mathrm{T})}$ \textup{(}resp. $\mathfrak{N}_{S_y(\Xi_\mathrm{T})}$\textup{)} w.r.t. increasing $\prec_{\mathrm{inlex}}$\textup{(}resp. $\prec_{\mathrm{lex}}$\textup{)} on $(i, j)$.
\\
\indent \textbf{Step6.} Use $C, N$ to obtain the reduced Gr\"{o}bner basis $G$ for $\mathcal{I}(\Xi_\mathrm{T})$.
\\
\indent \textbf{Step7.} Send $G$ to Farr-Gao process to finish the computation.
\end{alg}

Algorithm \ref{XTBM} has been implemented on \verb"Maple"$\circledR$ 16 that is installed on a laptop with 8 Gb RAM and 2.3 GHz CPU. For $\prec_{\mathrm{grevlex}}$, its running times on 250, 500, and 1000 points in $\mathbb{F}_q^2$ are compared with the build-in command \verb"VanishingIdeal" of \verb"Maple"$\circledR$.

\vskip 0.3cm

When $q=41$, we have

\begin{center}
\begin{tabular}{l|lllll}\hline
  \backslashbox{Algorithms}{$\#\Xi$} & 250     & 500   & 1000  \\ \hline\hline
  Algorithm \ref{XTBM}      & 4.264 s & 29.002 s & 142.377 s \\
  \verb"VanishingIdeal"     & 4.883 s & 31.675 s  & 147.515 s\\
  \hline
\end{tabular}
\end{center}

\vskip 0.3cm

When $q=101$, we have

\vskip 0.3cm

\begin{center}
\begin{tabular}{l|lllll}\hline
  \backslashbox{Algorithms}{$\#\Xi$} & 250     & 500   & 1000  \\ \hline\hline
  Algorithm \ref{XTBM}      & 4.384 s & 29.998 s & 146.227 s \\
  \verb"VanishingIdeal"     & 4.961 s & 31.684 s  & 151.758 s\\
  \hline
\end{tabular}
\end{center}

\bibliographystyle{plain}
\bibliography{refDCZL2010}

\end{document}